\theoremstyle{plain}
\newtheorem{thm}{Theorem}[section]
\newtheorem{cor}[thm]{Corollary}
\newtheorem{lem}[thm]{Lemma}
\numberwithin{equation}{section}
\newcommand{\T}{\mathbb{T}}
\newcommand{\G}{\mathcal{G}_{n,d}}
\newcommand{\F}{\mathcal{F}}
\newcommand{\E}[1]{\mathbb{E}\left[#1\right]}
\newcommand{\pr}[1]{\mathbb{P}\left[#1\right]}
\newcommand{\Est}[1]{\mathbb{E}^*\left[#1\right]}
\newcommand{\ind}[1]{\mathbf{1}_{\{ #1 \}}}
\newcommand{\eps}{\epsilon}
\newcommand{\Lm}{\Lambda}
\newcommand{\al}{\alpha_d}
\newcommand{\ro}{\rho_d}
\newcommand{\rt}{\circ}
\newcommand{\prc}{\mathbf{Y}}
\newcommand{\iid}{\mathbf{X}}
\newcommand{\cplbl}{\mathbf{W}}
\newcommand{\ber}{\mathrm{Ber}_d}
\newcommand{\den}[1]{\mathrm{den}(#1)}
\newcommand{\corr}[1]{\mathrm{corr}(#1)}
\newcommand{\avdeg}[1]{\mathrm{avdeg}(#1)}
\title{Factor of IID percolation on trees}
\author{Mustazee Rahman}
\address[Mustazee Rahman]
{Department of Mathematics\\
University of Toronto\\
40 St.~George Street\\
Toronto\\
ON M5S 2E4\\
Canada}
\email[Mustazee Rahman]{mustazee@math.toronto.edu}
\keywords{factor of iid process, induced forest, percolation, random regular graphs, local algorithms.}
\begin{document}

\begin{abstract}
We study invariant percolation processes on the $d$-regular tree that are obtained as a factor
of an iid process. We show that the density of any factor of iid site percolation process
with finite clusters is asymptotically at most $(\log d)/d$ as $d \to \infty$. This bound is asymptotically
optimal as it can be realized by independent sets. One implication of the result is a $(1/2)$-factor
approximation gap, asymptotically in $d$, for estimating the density of maximal induced forests in
locally tree-like $d$-regular graphs via factor of iid processes.
\end{abstract}

\maketitle

\section{Introduction} \label{sec:intro}
Let $\T_d$ denote the rooted $d$-regular tree where a distinguished vertex $\rt$ is the root.
Let $\chi$ be a finite set of `colours'. A \emph{factor of iid} (FIID) process on $\T_d$ is an
invariant random process $\Phi$ taking values in $\chi^{V(\T_d)}$ that is defined as follows.
Let $f : [0,1]^{V(\T_d)} \to \chi$ be a measurable function with respect to the product Borel $\sigma$-algebra
of $[0,1]^{V(\T_d)}$. Suppose $f$ is also invariant under root-preserving automorphisms, i.e.,
$f(\gamma \cdot \omega) = f (\omega)$ for every $\omega \in [0,1]^{V(\T_d)}$ and any
$\gamma \in \mathrm{Aut}(\T_d)$ satisfying $\gamma(\rt) = \rt$. \big(The automorphisms act
of $[0,1]^{V(\T_d)}$ by $\gamma \cdot \omega(v) = \omega(\gamma^{-1}v)$\big).
A \emph{random labelling} $\iid = (X(v), v \in V(\T_d))$ of $\T_d$ is a process consisting of independent, identically
distributed random variables $X(v)$ such that each $X(v) \sim \mathrm{Uniform}[0,1]$.
Since $\mathrm{Aut}(\T_d)$ acts transitively on $V(\T_d)$, for each vertex $v \in V(\T_d)$
let $\gamma_{v \to \rt}$ be an automorphism such that $\gamma_{v \to \rt}v = \rt$.
The process $\Phi = (\Phi(v), v \in V(\T_d))$ is then defined by
$$\Phi(v) = f(\gamma_{v \to \rt} \cdot \iid)\,.$$
The value of $f$ does not depend on the choice of $\gamma_{v \to \rt}$
because $f$ is invariant under root-preserving automorphisms.
The function $f$ is called the \emph{factor} associated to $\Phi$.
An FIID process $\prc$ is called a \emph{percolation process}
if $\prc \in \{0,1\}^{V(\T_d)}$, i.e., $\prc$ is $\{0,1\}$--valued at each vertex.

Informally speaking, the factor is a rule that decides the value at the root by considering
the randomly labelled rooted tree as the input. Given any other vertex $v$
the value of the process at $v$ is determined by shifting $v$ to the root,
which naturally permutes the labels, and then applying the rule for the root.
The prototypical example of FIID percolation is Bernoulli site percolation:
for each $p \in [0,1]$ the factor associated to Bernoulli site percolation on $\T_d$
of density $p$ is $f(\omega) = \ind{\omega(\rt) \leq p}$.

FIID processes over $\T_d$, or more generally over Cayley graphs of finitely generated non-amenable groups,
have been of interest in probability theory, combinatorics, computer science and ergodic theory.
Recently, they have been used as randomized algorithms to construct and estimate important
graph parameters such as maximal independent sets, perfect matchings, large cut sets, eigenvectors, etc.
See \cite{CGHV, CL, EL, GG, HV, HW, LW, Lyons, LN} and the references therein for
examples. We explain in Section \ref{sec:proj} how FIID processes on $\T_d$ serve as a model of
\emph{local algorithms} for random $d$-regular graphs (or for any sequence of locally tree-like $d$-regular graphs).

FIID processes also arise naturally in the theory of sparse graph limits as developed by
Hatami, Lov\'{a}sz and Szegedy \cite{HLS}. The authors pose the question of what processes
over large graphs can be realized as FIID processes on $\T_d$, and make the conjecture
that all reasonable optimization problems on random $d$-regular graphs can be approximated by
FIID processes on $\T_d$ as the size of the graphs tend to infinity \cite[Conjecture 7.13]{HLS}.
The conjecture was refuted for maximum independent sets by Gamarnik and Sudan \cite{GS}.
They showed that maximal independent sets in random $d$-regular graphs cannot be approximated
by FIID independent sets in $\T_d$ for large $d$. More specifically, they proved that the maximal
density of FIID independent sets in $\T_d$ is smaller than that of random $d$-regular graphs by a
multiplicative factor of less than 1 irrespective of the graphs' size, provided that
$d$ is sufficiently large. An asymptotically optimal gap by a factor of $1/2$, as $d \to \infty$, was established
in \cite{RV}. On the other hand, the conjecture holds for perfect matchings \cite{CL, LN}
and covariance structures \cite{BSV}.

In this paper we study FIID percolation processes and show as a consequence of the main theorem
that maximal induced subgraphs with bounded size components on random
$d$-regular graphs cannot be realized as FIID percolation processes on $\T_d$.
Due to the local tree-like nature of random $d$-regular graphs, induced subgraphs with
bounded components will be induced forests with high probability. On the other hand, any
induced subgraph of $\T_d$ is a forest. Our main result (Theorem \ref{thm:main}) then implies that
FIID induced forests in $\T_d$ whose components are finite with probability one cannot
model the maximal induced forests in random $d$-regular graphs if $d$ is large. This is
interesting because the best known lower bounds to the maximum size density of induced forests
in random regular graphs is derived by way of FIID percolation processes on regular trees having
finite components \cite{HWlwbdd}. Since independent sets are induced forests having components
of size one, our result generalizes the aforementioned results about independent sets.
In order to state our results precisely we introduce some terminology.

The \emph{density} of an FIID percolation process $\prc$ on $\T_d$ is
$$\den{\prc} = \pr{\prc (\rt) = 1} = \E{f(\iid)}\,.$$
The density does not depend on the choice of the root due to invariance of $\prc$.
The \emph{average degree} of $\prc$ is the expected degree of the root,
after being conditioned to be in $\prc$:
$$ \avdeg{\prc} = \E{\# \{ v \in N_{\T_d}(\rt) : \prc(v) = 1\}\mid \prc(\rt) = 1}.$$
The main result of this paper states that if $\avdeg{\prc} = o(\log d)$
as $d \to \infty$ then $\den{\prc} \leq (1+ o(1))\frac{\log d}{d}$. A
corollary is that when the components of the subgraph induced by $\{ v \in V(\T_d): \prc(v) = 1\}$
are finite with probability one then the density of $\prc$ is asymptotically at most $(1 + o(1)) \frac{\log d}{d}$ as $d \to \infty$.
This is because if the components are finite then the average degree of $\prc$ is at most 2.
Our bound is optimal as $d \to \infty$ because there exists FIID independent sets in $\T_d$
that have density asymptotically equal to $(\log d)/d$ as $d \to \infty$ \cite{GG, LW, Shearer}.

An important concept in the proof of the aforementioned result is a measure of correlation
between the value of the root and one of its neighbours in a FIID percolation process $\prc$. 
Let $\rt'$ be a fixed neighbour of the root $\rt$. The \emph{correlation ratio} of $\prc$ is defined as
\[ \corr{\prc} = \frac{\pr{\prc(\rt) = 1, \prc(\rt') = 1}}{\pr{\prc(\rt) = 1} \cdot \pr{\prc(\rt') = 1}}
=  \frac{\pr{\prc(\rt) = 1, \prc(\rt') = 1}}{\den{\prc}^2}\,.\]
Note that the usual correlation between $\prc(\rt)$ and $\prc(\rt')$ is
$\frac{\pr{\prc(\rt) = 1, \prc(\rt') = 1}-\den{\prc}^2}{\den{\prc}\cdot(1-\den{\prc})}$.
In this paper we deal instead with the correlation ratio as defined above because it arises
naturally in the course of the analysis. The correlation ratio is related to the average
degree as $\avdeg{\prc_d} = d \, \den{\prc} \corr{\prc}$. Our main result is the following.

\begin{thm} \label{thm:main}
Let $\{ \prc_d \}$ be a sequence of FIID percolation processes where $\prc_d$ is a
percolation process on $\T_d$. If $\corr{\prc_d} \to 0$ as $d \to \infty$ then
\[ \limsup_{d \to \infty} \, \frac{\den{\prc_d}}{(\log d)/d} \leq 1\,.\]
Alternatively, if $\avdeg{\prc_d} = o(\log d)$ as $d \to \infty$ then
$ \limsup_{d \to \infty} \, \frac{\den{\prc_d}}{(\log d)/d} \leq 1$.
\end{thm}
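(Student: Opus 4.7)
The plan is to prove the correlation-ratio version first; the $\avdeg$ version then follows. Indeed, the identity $\avdeg{\prc_d} = d \cdot \den{\prc_d} \cdot \corr{\prc_d}$ implies that if the $\avdeg$ conclusion fails, so that $\den{\prc_d} \geq (1 + \epsilon)(\log d)/d$ along a subsequence while $\avdeg{\prc_d} = o(\log d)$, then $\corr{\prc_d} = o(1)$ along the same subsequence, and the correlation-ratio statement produces a contradiction. So I assume henceforth that $\corr{\prc_d} \to 0$, and abbreviate $p = \den{\prc_d}$, $c = \corr{\prc_d}$.

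The main argument is a two-copy coupling in the spirit of Rahman--Vir\'ag \cite{RV}. Take two independent uniform labellings $\iid$ and $\iid'$ of $\T_d$, and for $s \in [0,1]$ let $\cplbl^s$ be the mixed labelling in which, independently at each vertex $v$, $\cplbl^s(v) = \iid(v)$ with probability $1-s$ and $\cplbl^s(v) = \iid'(v)$ with probability $s$. Set $\prc = f(\iid)$ and $\prc^s = f(\cplbl^s)$; then $\prc^0 = \prc$ and $\prc^1$ is independent of $\prc$. The central quantity is
\[ \Lm(s) \;=\; \pr{\prc(\rt) = 1,\; \prc^s(\rt) = 1}, \]
which interpolates between $\Lm(0) = p$ and $\Lm(1) = p^2$, so $p(1-p) = \int_0^1 (-\Lm'(s))\,ds$. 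A vertex-wise Russo-type differentiation, combined with the root-fixing symmetry of $f$, yields a distance-shell decomposition
\[ -\Lm'(s) \;=\; \sum_{k \geq 0} N_k(d)\, I_k(s), \qquad N_0(d) = 1,\; N_k(d) = d(d-1)^{k-1} \text{ for } k \geq 1, \]
where $I_k(s) \geq 0$ measures the influence on the joint event $\{\prc(\rt) = 1,\; \prc^s(\rt) = 1\}$ of resampling a vertex at distance $k$ from the root.

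The task is now to bound this sum shell by shell, sharply enough to conclude. The $k=0$ term contributes essentially $p(1-p)$ times $s$-dependent factors; the $k=1$ shell is controlled directly by the hypothesis $c \to 0$, giving something like $d \cdot I_1(s) \leq (1+o(1)) p c$ up to $s$-dependent factors; and for $k \geq 2$ one must propagate the distance-$1$ correlation decay down the tree to obtain that $I_k(s)$ decays faster than $d^{-k}$. Combining these and integrating over $s$ should yield an inequality of the form $p(1-p) \leq (1+o(1)) p \cdot (\log d)/d$, which rearranges to the desired bound (noting that we may assume $p \to 0$, since otherwise the target bound holds trivially).

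The main obstacle is this last piece: controlling $I_k(s)$ for $k \geq 2$ against the exponential growth $N_k(d) \sim d^k$, using only the distance-$1$ correlation hypothesis. No slack is available in the shell-by-shell budget, so propagating the bound through the tree will require exploiting the root-fixing invariance via mass transport together with a careful analysis of how a resampling at a far vertex travels through the factor $f$ to affect the root. The technical heart of the argument should therefore be an iterated correlation-decay estimate along $\T_d$, modelled on the analogous one for FIID independent sets in \cite{RV}, but adapted to the weaker correlation input available for general FIID percolation.
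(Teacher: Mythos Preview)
Your approach is fundamentally different from the paper's and, as written, has genuine gaps.

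First, a sanity check on the target inequality. You write that the argument ``should yield an inequality of the form $p(1-p) \leq (1+o(1))\, p \cdot (\log d)/d$, which rearranges to the desired bound''. It does not: dividing by $p$ gives $1-p \leq (1+o(1))(\log d)/d$, i.e.\ $p \geq 1 - (1+o(1))(\log d)/d$, the opposite of what you want. Even if you meant $p(1-p) \leq (1+o(1))(\log d)/d$ without the extra factor of $p$, nothing in the influence decomposition you sketch produces a $\log d$; in the paper that logarithm arises from the entropy term $h(\pi_d(S)) \approx \pi_d(S)\log(1/\pi_d(S))$, and there is no entropy anywhere in your outline.

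Second, the paper's proof does not proceed via a Russo-type influence expansion at all. It projects the FIID process onto random $d$-regular graphs and obtains the entropy inequality $\tfrac{d}{2}H(P_\Phi) - (d-1)H(\pi_\Phi) \geq 0$ (Theorem~\ref{thm:FIIDentropy}). Applying this to a \emph{single} copy of $\prc_d$ --- or, morally, to a two-copy object as in your plan --- yields only the bound $\limsup \den{\prc_d}\, d/\log d \leq 2$; this is exactly Theorem~\ref{thm:corr} specialised to $c=0$. To push from $2$ down to $1$ the paper couples $k$ copies $(\prc_d^1,\ldots,\prc_d^k)$ via the same re-randomisation you describe, applies the entropy inequality to the $\{0,1\}^k$-valued process, rewrites the resulting bound in terms of a ``stability'' variable $Q_{d,p} = \E{\prc^1(\rt)\mid \iid^0,\ber^p}$ on the conditioned space $\{\prc^0(\rt)=1\}$, and then lets $k\to\infty$ after a careful choice of $p$ so that $\E{Q^u}=1/\alpha$. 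The many-copy structure is the essential idea; a two-copy coupling cannot improve on~$2$.

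Third, the step you flag as the ``main obstacle'' --- propagating the distance-$1$ correlation hypothesis to control $I_k(s)$ for $k\geq 2$ against the growth $N_k(d)\sim d^k$ --- is not merely technical but appears to be genuinely impossible under only $\corr{\prc_d}\to 0$. That hypothesis constrains the joint law of $(\prc_d(\rt),\prc_d(\rt'))$ at adjacent vertices; it says nothing about how the factor $f$ depends on labels at distance $\geq 2$, and hence nothing about $I_k$ for $k\geq 2$. The analogy with \cite{RV} is misleading: for independent sets the constraint $\pr{\prc(\rt)=\prc(\rt')=1}=0$ is a hard combinatorial restriction, and even there the argument in \cite{RV} is the entropy/many-copy one, not an influence expansion.
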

The result stated in terms of the average degree follows from the one stated in terms of the correlation ratio
because the the aforementioned formula that connects to these two quantities. Assuming,
without loss of generality, that $\den{\prc_d} \geq (\log d)/d$ we deduce that if
$\avdeg{\prc_d} = o(\log d)$ then $\corr{\prc_d} \to 0$.

\begin{cor} \label{cor:fincomp}
Let $\{ \prc_d \}$ be a sequence of FIID percolation processes with $\prc_d$ defined on $\T_d$.
Suppose that the connected components of the subgraph of $\T_d$ 
induced by the vertices $\{v \in V(\T_d): \prc_d(v) = 1\}$ are finite with probability one. Then
\[\limsup_{d \to \infty} \, \frac{\den{\prc_d}}{(\log d)/d} \leq 1\,.\]
\end{cor}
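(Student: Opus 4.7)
The plan is to reduce Corollary \ref{cor:fincomp} to Theorem \ref{thm:main} by showing that the finite-component hypothesis forces the bound $\avdeg{\prc_d} \leq 2$, which is certainly $o(\log d)$. The intuition, as the author notes just after the statement, is elementary: the subgraph of $\T_d$ induced by $S := \{v : \prc_d(v) = 1\}$ is automatically a forest, and each finite tree component $C$ satisfies the handshake relation $\sum_{v \in C} \deg(v) = 2(|C|-1)$, so its average degree is strictly less than $2$. The real work lies in translating this per-component fact into a statement about the Palm-type quantity $\avdeg{\prc_d}$, which is the expected degree of $\rt$ conditional on $\rt \in S$.

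For this translation I would invoke the mass transport principle on $\T_d$, which applies because the FIID process $\prc_d$ is invariant in law under the full automorphism group $\mathrm{Aut}(\T_d)$. With $G[S]$ denoting the induced subgraph on $S$ and $C(u)$ the component of $u$ in $G[S]$, I would define the transport
\[ F(u,v) = \frac{\deg_{G[S]}(v)}{|C(u)|} \, \ind{u \in S,\, v \in S,\, v \in C(u)}. \]
Fixing $u = \rt$ and summing over $v$ gives $(2|E(C(\rt))|/|C(\rt)|)\ind{\rt \in S}$, by the handshake lemma applied inside the finite tree $C(\rt)$. Fixing $v = \rt$ and summing over $u$ gives $\deg_{G[S]}(\rt)\ind{\rt \in S}$, since on $\{\rt \in S\}$ the condition $\rt \in C(u)$ is equivalent to $u \in C(\rt)$, and each of the $|C(\rt)|$ such vertices contributes $\deg_{G[S]}(\rt)/|C(\rt)|$.

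The mass transport identity then gives
\[ \E{\deg_{G[S]}(\rt)\,\ind{\rt \in S}} \;=\; \E{\frac{2(|C(\rt)|-1)}{|C(\rt)|}\,\ind{\rt \in S}} \;\leq\; 2\,\pr{\rt \in S} \;=\; 2\,\den{\prc_d}, \]
where I have used $|E(C(\rt))| = |C(\rt)|-1$ for the finite tree $C(\rt)$. Dividing by $\den{\prc_d}$ yields $\avdeg{\prc_d} \leq 2$, and Theorem \ref{thm:main} completes the argument. I anticipate no serious obstacle beyond verifying that mass transport applies in this setting: one needs $\prc_d$ to be invariant under $\mathrm{Aut}(\T_d)$, which follows from its construction as an FIID process together with the root-preserving invariance of the factor $f$, and one needs the components of $G[S]$ to be almost surely finite so that the tree identity $|E|=|V|-1$ holds and the transport sums are well defined. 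Both are supplied by the hypothesis of the corollary.
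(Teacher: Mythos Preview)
Your proposal is correct and follows essentially the same route as the paper: both reduce to Theorem~\ref{thm:main} by establishing $\avdeg{\prc_d}\leq 2$ from the finite-tree structure of the components. The paper phrases the key step as ``the root is uniform in its component by invariance and transitivity, and a tree on $n$ vertices has average degree $2(n-1)/n$,'' while you give the explicit mass-transport computation that underlies that re-rooting principle; these are two presentations of the same argument.
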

Let us now explain the relevance of this result to maximum induced forests in random $d$-regular graphs
as it relates to the conjecture of Hatami, Lov\'{a}sz and Szegedy.
For an integer $\tau \geq 1$ define $\alpha^{\tau}(G)$ to be the maximum value of $|S|/|G|$
where $S \subset V(G)$ has the property that the induced subgraph $G[S]$ has components of
size at most $\tau$. Let $\G$ denote a random $d$-regular graph on $n$ vertices. Bayati, Gamarnik
and Tetali \cite{BGT} show that $\E{\alpha^{1}(\G)}$ converges as $n \to \infty$.
Employing their argument it can be deduced that $\E{\alpha^{\tau}(\G)}$ converges
to a limit $\alpha^{\tau}(d)$ for every $\tau$.
It is natural to consider the quantity $\alpha^{\mathrm{finite}}(d) = \sup_{\tau} \alpha^{\tau}(d)$ and compare it to the quantity
$$\alpha_{\mathrm{FIID}}^{\mathrm{finite}}(d) = \sup \{ \den{\prc}: \prc \;\text{is a percolation process on}\;
\T_d \;\text{with finite components}\}.$$

Due to the locally tree-like nature of random regular graphs the quantity $\alpha^{\mathrm{finite}}(d)$
measures the size density of the largest induced forests in $\G$ as $n \to \infty$. The quantity
$\alpha_{\mathrm{FIID}}^{\mathrm{finite}}(d)$ measures the largest size density of induced forests
in $\G$ that are generated from FIID processes (explained further in Section \ref{sec:proj}).
The conjecture of Hatami, Lov\'{a}sz and Szegedy would imply that
$\alpha^{\mathrm{finite}}(d) = \alpha_{\mathrm{FIID}}^{\mathrm{finite}}(d)$ for every $d$.
Corollary \ref{cor:fincomp} and the existence of FIID independent sets in $\T_d$ of density
$(1 + o(1))\frac{\log d}{d}$ implies
$$\alpha_{\mathrm{FIID}}^{\mathrm{finite}}(d) = (1+o(1)) \frac{\log d}{d} \;\;\text{as}\;\; d \to \infty.$$

It is known that $\alpha^{\mathrm{finite}}(d) = (2 + o(1)) \frac{\log d}{d}$ as $d \to \infty$ \cite{Rahman}.
It has been actually been known much earlier that the independence ration $\alpha^1(d) = (2+o(1)) \frac{\log d}{d}$
as $d \to \infty$ \cite{Bol, FL}. From these results we conclude that
$$\lim_{d \to \infty} \; \dfrac{\alpha_{\mathrm{FIID}}^{\mathrm{finite}}(d)}{\alpha^{\mathrm{finite}}(d)} = \frac{1}{2}.$$
This shows a $(1/2)$-factor approximation gap as $d \to \infty$ for estimating the maximum density of induced
forests in random $d$-regular graphs by using FIID induced forest in $\T_d$ with finite components.

Two natural questions arise following Theorem \ref{thm:main}. First, is there a bound on the density
of FIID percolation processes on $\T_d$ in terms of the correlation ratio? Suppose $\{ \prc_d\}$
is a sequence of FIID percolation such that $\corr{\prc_d} \to c$. What can be concluded about
$\den{\prc_d}$ in terms of $c$? Second, what is the maximal density of an FIID percolation on the
\emph{edges} of $\T_d$ if it has finite clusters? The answer to the second question is due to
H\"{a}ggstr\"{o}m \cite{Hag} who proved that the maximal density is $2/d$.

To address the first question, note that there is no bound on the density if $c = 1$. Indeed, Bernoulli percolation on $\T_d$
has correlation ratio $1$ and density $p$ for any desired $p \in [0,1]$. Also, Lyons \cite{Lyons} constructs
FIID percolation processes on $\T_d$ having density arbitrarily close to $1/2$ and correlation ratio $1 - O(1/\sqrt{d})$ for large $d$.
This construction can be utilized to get FIID percolation processes on $\T_d$ having density of any fixed value
but having correlation ratio of order $1 - O(1/\sqrt{d})$ . On the other hand, if $c < 1$ then there is a bound on the
density of order $O((\log d)/d)$.

\begin{thm} \label{thm:corr}
Let $\{ \prc_d\}$ be a sequence of FIID percolation processes where $\prc_d$ is a percolation process on $\T_d$.
Let $\Psi(c) = 1 - c + c\log c$ for $c \geq 0$ ($\Psi(0) = 1$). If $\corr{\prc_d} \leq c < 1$ for every $d$, or
$\corr{\prc_d} \geq c > 1$ for every $d$ then for any $\eps > 0$ there exists $d_0 = d_0(c,\eps)$ such that
for $d \geq d_0$,
\[ \den{\prc_d} \leq \dfrac{2}{\Psi(c)d} \cdot \left ( \log d - \log \log d + 1 +\log \Psi(c) + \eps \right )\,.\]

\noindent Also, given $0 \leq c < 1$ there exists a sequence of percolations $\{ \mathbf{Z}_d \}$ with
$\corr{\mathbf{Z}_d} = c$ and
\[ \den{\mathbf{Z}_d} = \frac{1 - o(1)}{\sqrt{1-c}} \cdot \frac{\log d}{d} \quad \text{as}\; d \to \infty.\]
\end{thm}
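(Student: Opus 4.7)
I treat the two parts of the theorem separately, starting with the construction (second assertion) as it is more concrete. Given $c \in [0,1)$, I would take $\mathbf{Z}_d = I_d \cup B_d$, where $I_d$ is a FIID independent set on $\T_d$ of density $(1-o(1))(\log d)/d$ (existing by \cite{GG,LW,Shearer}) and $B_d$ is an independent Bernoulli$(\epsilon_d)$ site percolation derived from a second iid label layer, so that $\mathbf{Z}_d$ is FIID in the joint $[0,1]^2$ labels. Because $\pr{I_d(\rt) = I_d(\rt') = 1} = 0$ for neighbors, the independence of $I_d$ and $B_d$ yields
\[ \corr{\mathbf{Z}_d} \;=\; 1 - \Bigl(\frac{\den{I_d}(1-\epsilon_d)}{\den{\mathbf{Z}_d}}\Bigr)^2\,. \]
Setting this equal to $c$ together with $\den{\mathbf{Z}_d} = \den{I_d} + \epsilon_d - \den{I_d}\epsilon_d$ forces $\epsilon_d = \den{I_d}(1/\sqrt{1-c} - 1)(1+o(1))$ and therefore $\den{\mathbf{Z}_d} = (1-o(1))\den{I_d}/\sqrt{1-c}$, matching the claim.

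For the upper bound, write $p = \den{\prc_d}$ and let $X = \#\{v \in N(\rt) : \prc_d(v) = 1\}$, so $\E{X \mid \prc_d(\rt) = 1} = dpc$. The key observation is that the \emph{$k$-degree subsets} $I_k = \{v : \prc_d(v) = 1,\ \deg_{\prc_d}(v) = k\}$ are all FIID, and each has maximum induced degree at most $k$, so $I_0$ is precisely the FIID independent set appearing in \cite{RV}. Two ingredients are needed: a density bound for $I_k$ analogous to the independent-set bound $(1 + o(1))(\log d)/d$ for $I_0$, presumably of the form $\den{I_k} \leq (f(k) + o(1))(\log d)/d$ obtained by treating $I_k$ as a union of independent sets or by iterating the \cite{RV} argument; and a Poisson-type large-deviation estimate for the conditional distribution $X \mid \prc_d(\rt) = 1$, which has mean $dpc$ and should be comparable to a Binomial$(d, pc)$ law with rate function $\Psi(c) = 1 - c + c\log c = \sup_{\theta}[\theta c - (e^\theta - 1)]$. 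Combining these via the partition identity $p = \sum_{k \geq 0} \den{I_k} = \sum_{k \geq 0} p \cdot \pr{X = k \mid \prc_d(\rt) = 1}$ and optimizing over a threshold $k^*$ separating the $I_k$-bound regime from the Chernoff-tail regime should produce the prefactor $2/\Psi(c)$, with the lower-order corrections $-\log\log d + 1 + \log \Psi(c)$ emerging naturally from the optimization.

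The principal obstacles lie in those two ingredients. First, one must establish sharp density bounds for max-degree-$k$ FIID subsets of $\T_d$; the case $k = 0$ is the content of \cite{RV}, and an extension to $k \geq 1$ should follow by a careful iteration of the argument, although the dependence on $k$ is delicate. Second, one must control the conditional moment generating function $\E{e^{\theta X} \mid \prc_d(\rt) = 1}$ uniformly in terms of $c$: by root-preserving-automorphism invariance, the $d$ neighbors of $\rt$ are exchangeable conditional on $\prc_d(\rt) = 1$ and each has conditional marginal $pc$ of being in $\prc_d$, so a coupling that resamples the iid labels outside a large ball around $\rt$ — together with a de Finetti-type estimate for the resulting near-exchangeable sum of near-independent indicators — should yield an approximate Binomial$(d, pc)$ law and the required Chernoff rate $\Psi(c)$. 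The $c > 1$ regime is then treated by a symmetric argument in which one works with the event that $X$ is much less than $dpc$ rather than much more.
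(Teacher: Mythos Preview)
Your construction for the second assertion is correct and essentially equivalent to the paper's: the paper takes a random vertex-by-vertex choice between $\mathbf{I}_d$ and a Bernoulli percolation, whereas you take their union, but the arithmetic leads to the same $\den{\mathbf{Z}_d} = (1-o(1))(\log d)/(d\sqrt{1-c})$.

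The upper bound argument, however, has a genuine gap and is not the paper's route. Your plan hinges on showing that, conditional on $\prc_d(\rt)=1$, the degree $X=\sum_{v\in N(\rt)}\prc_d(v)$ behaves like $\mathrm{Binomial}(d,pc)$ so that its large-deviation rate is $\Psi(c)$. But the neighbours of $\rt$ are only \emph{exchangeable} conditionally on $\prc_d(\rt)=1$, not even approximately independent: an FIID factor can introduce arbitrary correlations among the $d$ neighbour indicators, so by de Finetti the conditional law of $X$ is a mixture of binomials with a mixing measure you have no control over, and the Cram\'er rate of such a mixture need not be anywhere near $\Psi(c)$. The ``resampling labels outside a large ball'' idea does not help, since the dependence among neighbours is created by labels \emph{inside} the ball. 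Without this Poisson-type concentration the partition $\prc_d=\bigcup_k I_k$ gives nothing: you have no handle on how the mass is distributed among the $I_k$. The second ingredient is also problematic: decomposing $I_k$ into $k+1$ independent sets via a greedy colouring does not produce \emph{FIID} independent sets, so the bound from \cite{RV} does not apply to the pieces, and an inductive extension of that bound to max-degree-$k$ FIID sets would itself require the entropy machinery you are trying to avoid.

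The paper's proof is much more direct and bypasses all of this. One applies the entropy inequality $\tfrac{d}{2}H(P_{\prc_d})-(d-1)H(\pi_{\prc_d})\ge 0$ to the single $2\times 2$ edge profile of $\prc_d$, with entries written explicitly in terms of $\alpha_d=\den{\prc_d}$ and $\rho_d=\corr{\prc_d}$. A short Taylor expansion of $h(x)=-x\log x$ shows the entropy functional is at most $-\tfrac{d}{2}\Psi(\rho_d)\alpha_d^2+h(\alpha_d)+\alpha_d+O(d\alpha_d^3)$; the function $\Psi$ appears here purely from the algebra $h(\rho)+\rho-1=-\Psi(\rho)$, not from any large-deviation principle. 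Writing $\alpha_d=\beta_d(\log d)/d$, non-negativity forces $\beta_d$ to lie below the smaller root of an explicit quadratic, and expanding that root gives exactly $\tfrac{2}{\Psi(c)}\bigl(1+\tfrac{1+\log\Psi(c)-\log\log d}{\log d}+o(1/\log d)\bigr)$, with the monotonicity $\Psi(\rho_d)\ge\Psi(c)$ supplying the uniformity in $d$.
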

We are not actually aware of a sequence of FIID percolation processes whose correlation ratios
satisfy $\Psi(\prc_d) \geq c > 1$ for every $d$. We also expect the bounds in Theorem \ref{thm:corr}
to be suboptimal but do not know how to improve upon them in a general setting.

\paragraph{\textbf{Outline of the paper}}
The proofs of Theorems \ref{thm:main} and \ref{thm:corr} are based on the connection between
FIID processes on $\T_d$ and their projections onto random $d$-regular graphs. We will
explain how to project an FIID percolation on $\T_d$ to a random $d$-regular
graph. This will result in a percolation process on random $d$-regular graphs whose local
statistics, such as the density and correlation ratio, are close to that of the original with high probability.
Then we will use combinatorial arguments to bound the  probability of observing a percolation process
on random $d$-regular graphs whose statistics are close to that of the original.
This leads to a fundamental non-negativity condition for an entropy-type functional 
that can be associated to any FIID percolation process. These results are developed in Section \ref{sec:rrg},
where the main result is the derivation of the aforementioned entropy inequality in Theorem \ref{thm:FIIDentropy}.
From there we proceed to show that the entropy functional is non-negative only if the density is small
with respect to the correlation ratio. This is done in Section \ref{sec:corrprf} to prove Theorem \ref{thm:corr}
and then in Section \ref{sec:mainprf} to prove Theorem \ref{thm:main}.

The basic strategy of the proof of Theorem \ref{thm:main} is similar to what has been done
for independent sets in \cite{RV}. Our techniques improve that result to the more general
question of percolation with finite clusters. Finally, we note that the entropy functional that we come
upon has been studied by Bowen \cite{Bwn}, who developed an isomorphism invariant for free group
actions on measure spaces in order to resolve questions in ergodic theory. Backhausz and Szegedy \cite{BS}
have also used similar entropy arguments to show that certain processes in random $d$-regular graphs
cannot be represented on trees.

\section{Entropy inequality for FIID processes} \label{sec:rrg}

In this section we prove an inequality that is satisfied by the entropy
of the local statistics of any FIID process $\Phi$ taking values in $\chi^{V(\T_d)}$.
The key result of this section is presented in Theorem \ref{thm:FIIDentropy}.
In order to prove this theorem we first explain how to project a FIID process
$\Phi \in \chi^{V(\T_d)}$ onto random $d$-regular graphs to obtain a random
$\chi$-valued colouring of random $d$-regular graphs such that the local statistics
of the coloring are close to the local statistics of $\Phi$ with high probability (Section \ref{sec:proj}).
This is actually what it means to model FIID processes on random regular graphs.
Following this we use combinatorial arguments to count colourings of random $d$-regular
graphs whose statistics agree with the statistics of the projection (Section \ref{sec:count}).
From this counting argument we can prove Theorem \ref{thm:FIIDentropy}.

Throughout this section the values of the degree $d$ and the set $\chi$ are fixed.
We consider asymptotic analysis in the parameter $n \to \infty$, where $n$ is the size
of the graphs to which we project $\Phi$.

We use the configuration model \cite[Chapter 2.4]{Bolbook} as the
probabilistic method to sample a random $d$-regular graph, $\G$, on $n$ labelled vertices.
It is sampled in the following manner. Each of the $n$ distinct vertices emit
$d$ distinct half-edges, and we pair up these $nd$ half-edges uniformly at random. (We
assume that $nd$ is even.) These $nd/2$ pairs of half-edges can be glued into full edges
to yield a random $d$-regular graph. Note that the resulting graph can have loops and
multiple edges. There are $(nd-1)!! = (nd-1)(nd-3)\cdots 3\cdot 1$ such graphs.
We denote this set of multigraphs by $G_{n,d}$; so $\G$ is a uniform random element
of $G_{n,d}$.

In order to project FIID processes on $\T_d$ onto $\G$ it is necessary to understand
the \emph{local structure} of $\G$. It is well known (see \cite[Chapter 9.2]{JLbook}) that
the number of cycles of length $\ell$ in $\G$ converges in moments to a Poisson random
variable with mean $(d-1)^{\ell}/2\ell$. Consequently, for every constant $L$, the expected
number of cycles in $\G$ of length at most $L$ remains bounded as $n \to \infty$.
It follows from this that $\G$ is locally tree-like in the sense that the fixed size
neighbourhoods of all but an order $O(1)$ number of vertices are tress with high probability.
Indeed, if the $r$-neighbourhood, $N_G(v,r)$, of a vertex $v$ in a graph $G$ is not a tree
then it contains a cycle of length at most $2r$. Thus, $N_G(v,r)$ is a tree if $v$ is not within
graph distance $r$ of any cycle in $G$ having length at most $2r$. Since the expected number
of cycles in $\G$ of length at most $2r$ is bounded in $n$, and the $r$-neighbourhood
of any vertex in $\G$ contains at most $d^r$ vertices, it follows that the expected number of
vertices $v$ in $\G$ such that $N_{\G}(v,r)$ is not a tree remains bounded in $n$ for every fixed $r$.
Therefore, if $\rt_n$ is a uniform random vertex of $\G$ then for every $r \geq 0$
\begin{equation} \label{eqn:locallimit}
\pr{N_{\G}(\rt_n,r) \cong N_{\T_d}(\rt,r)} = 1 - O(1/n) \quad \text{as}\; n \to \infty\,.
\end{equation}
This is what it means for $\G$ to be locally tree-like.

\subsection{Projecting FIID processes onto random regular graphs} \label{sec:proj}

Let $\T_{d,r}$ denote the rooted subtree $N_{\T_d}(\rt,r)$.
We say a factor $f$ is a \emph{block factor} if there exists a finite $r$ such that
$f$ maps $[0,1]^{V(\T_{d,r})}$ into $\chi$, i.e., $f$ does not depend of the labels
outside of $\T_{d,r}$. The smallest such $r$ is the \emph{radius} of the factor.
It is well known that given an FIID process $\Phi$ on $\T_d$ with factor $f$ it is possible to find
block factors $f_j$ such that the corresponding processes $\Phi^j$
converge to $\Phi$ in the weak topology \cite{Lyons}. As such, for any FIID percolation process $\prc$
there exist approximating percolation processes $\prc^j$ having block factors such
that $\den{\prc^j} \to \den{\prc}$ and $\corr{\prc^j} \to \corr{\prc}$.
Henceforth, we assume that all factors associated to FIID percolation processes on $\T_d$
are block factors.

We now explain how to project an FIID process $\Phi$ with block factor $f$ of radius $r$
onto a $d$-regular graph $G \in G_{n,d}$. The factor $f$ maps to a finite set $\chi$,
which we take to be $\{0,1,\ldots,q\}$. We begin with a random labelling
$\iid = (X(v), v \in V(G))$ of $G$. Given any vertex $v \in G$ if its $r$-neighbourhood
is a tree then set $\Phi_G(v) = f(X(u), u \in N_G(v,r))$. This is allowed since
$N_G(v,r) = \T_{d,r}$ by assumption. Otherwise, set $\Phi_G(v) = 0$.
The process $\Phi_G$ is the \emph{projection} of $\Phi$ onto $G$.
The projection $\Phi_G$ is also called a \emph{local algorithm} on $G$;
for example, if $\Phi$ is an independent set in $\T_d$ with probability one then
$\Phi_G$ would produce random independent sets in $G$ and be called
a local algorithm for independent sets in $G$.

If we project $\Phi$ to $\G$ then the projection $\Phi_{\G}$ has the property
that its local statistics converge to the local statistics of $\Phi$ as $n \to \infty$.
More precisely, let $(\rt_n,\rt'_n)$ be a uniform random directed edge of $\G$.
Let $\rt$ be the root of $\T_d$ with let $\rt'$ be a fixed neighbour. Then for any
$i,j \in \chi$, the probabilities
\begin{equation} \label{eqn:localstats}
\pr{\Phi_{\G}(\rt_n) = i, \Phi_{\G}(\rt'_n) = j} \to \pr{\Phi(\rt) = i, \Phi(\rt') = j} \quad \text{as}\; n \to \infty\,.
\end{equation}
Consequently, $\pr{\Phi_{\G}(\rt_n) = i} \to \pr{\Phi(\rt) = i}$ as well.
Note that $\rt_n$ is a uniform random vertex of $\G$.

To see (\ref{eqn:localstats}), observe that the value of the pair $(\Phi_{\G}(\rt_n), \Phi_{\G}(\rt'_n))$ depends only
on the labels on the $r$-neighbourhood of the edge $(\rt_n,\rt'_n)$. If this is a tree then
the $r$-neighbourhood of the edge $(\rt_n,\rt'_n)$ is isomorphic to the $r$-neighbourhood
of $(\rt,\rt')$. Therefore, $(\Phi_{\G}(\rt_n), \Phi_{\G}(\rt'_n))$ agrees with $(\Phi(\rt),\Phi(\rt'))$
if the labels on $N_{\G}((\rt_n,\rt'_n),r)$ are lifted to $N_{\T_d}((\rt,\rt'),r)$ in the natural way.
On the other hand, if $N_{\G}((\rt_n,\rt'_n),r)$ is not a tree
then $N_{\G}(\rt_n, r+1)$ is also not a tree since $N_{\G}((\rt_n,\rt'_n),r) \subset N_{\G}(\rt_n,r+1)$.
As $\G$ is locally tree-like as in (\ref{eqn:locallimit}),  we have that
$\pr{N_{\G}(\rt_n,r+1) \not\cong \T_{d,r+1}} = O(1/n)$.
As a result, it is straightforward to conclude that
$\big |\pr{\Phi_{\G}(\rt_n) = i, \Phi_{\G}(\rt'_n) = j} - \pr{\Phi(\rt) = i, \Phi(\rt') = j} \big| = O(1/n)$.

In the proof of Theorem \ref{thm:FIIDentropy} we need more than the convergence of local statistics.
We require that the random quantities $\# \{(u,v) \in E(\G): \Phi_{\G}(u) = i, \Phi_{\G}(v) = j\}$
are close to their expectations. Here, $(u,v)$ refers to a directed edge so $(u,v) \neq (v,u)$
unless $u=v$. Note that
\begin{equation*}
\pr{\Phi_{\G}(\rt_n) = i, \Phi_{\G}(\rt'_n) = j} = \E{\frac{\#\{(u,v) \in E(\G): \Phi_{\G}(u) = i, \Phi_{\G}(v) = j\}}{nd}} \,.
\end{equation*}
In other words, we require that the empirical values of the local statistics of
$\Phi_{\G}$ are close to their expected value with high probability.
To show this we use the Hoeffding--Azuma concentration inequality \cite[Theorem 1.20]{Bolbook}.
Consider a product probability space $(\Omega^n,\mu^n)$ and a bounded function $h : \Omega^n \to \mathbb{R}$
that is $L$-Lipschitz with respect to the Hamming distance in $\Omega^n$. The Hoeffding-Azuma inequality states that
\[ \mu^n\left ( \mid h - \mathbb{E}_{\mu^n}[h] \mid > \lambda \right ) \leq 2e^{-\frac{\lambda^2}{2L^2n}}. \]

To apply this to our case, fix a graph $G \in G_{n,d}$ and consider the probability space generated by
a random labelling $\iid$ of $G$. The function $h$ is taken to be $\# \{(u,v) \in E(G): \Phi_{G}(u) = i, \Phi_{G}(v) = j\}$.
We now verify that $h$ is Lipschitz as a function of the labels with Lipschitz constant at most $2d^{r+1}$.
If the value $X(w)$ of the label at $w$ is changed to $X'(w)$ then the value of the pair $(\Phi_{G}(u), \Phi_{G}(v))$ can
only change for edges $(u,v)$ that are within graph distance $r$ of $w$. Otherwise, the labels used to evaluate
$(\Phi_{G}(u), \Phi_{G}(v))$ does not contain $X'(w)$. Therefore, the number of directed edges $(u,v)$ where the
value of $\Phi_{G}$ can change is at most $2d^{r+1}$, which is a trivial upper bound on the number of directed edges that
meet $N_{G}(w,r)$. This implies that our function is $(2d^{r+1})$-Lipschitz.

From the Hoeffding--Azuma inequality we conclude that
\begin{align*}
\mathbb{P} \; \bigg [ \; & \bigg | \frac{\#\{(u,v) \in E(\G):
 \Phi_{\G}(u) = i, \, \Phi_{\G}(v) = j\}}{nd} \\
   \, - & \, \pr{\Phi_{\G}(\rt_n) = i, \Phi_{\G}(\rt'_n) = j} \bigg | > \eps \bigg ] \\
 \leq& \; 2 e^{- \frac{\eps^2 n}{8d^{2r}}}.
\end{align*}

Due to convergence of the expected value of local statistics as shown in (\ref{eqn:localstats})
we can replace the quantity $\pr{\Phi_{\G}(\rt_n) = i, \Phi_{\G}(\rt'_n) = j}$ in the inequality
above with the quantity $\pr{\Phi(\rt) = i, \Phi(\rt') = j}$ at the expense of adding a term
$\mathrm{err}_n$ such that $\mathrm{err}_n \to 0$ as $n \to \infty$. Then, after taking a
union bound over all $i,j \in \chi$ we deduce from the inequality above that for some
constant $C_{d,r,q}$ and error function $\mathrm{err}_n \to 0$ as $n \to \infty$, we have
\begin{align}
\mathbb{P} \bigg [ \max_{i,j \in \chi} \bigg | \frac{\#\{(u,v) \in E(\G):
\Phi_{\G}(u) = i, \Phi_{\G}(v) = j\}}{nd} -& \pr{\Phi(\rt) = i, \Phi(\rt') = j} \bigg | > \eps \bigg ] \nonumber \\
\leq& \, e^{- \frac{\eps^2 n}{C_{d,r,q}}} + \mathrm{err}_n. \label{eqn:concentration}
\end{align}

\subsection{Counting colourings of random regular graphs} \label{sec:count}
From (\ref{eqn:concentration}) we see that we can prove bounds on the local statistics
of $\Phi$ if we can use the structural properties of $\G$ to deduce bounds on the
empirical value of the local statistics of $\Phi_{\G}$. We achieve the latter by
combinatorial arguments.

On any graph $G \in G_{n,d}$ the process $\Phi_{G}$ creates
an ordered partition $\Pi$ of $V(G)$ with $q + 1 = \# \chi$ cells, namely, the sets
$\Pi(i) = \Phi_{G}^{-1}(\{i\})$ for $i \in \chi$. (It is ordered in the sense that the cells
are distinguishable.) Associated to $\Pi$ is its \emph{edge profile},
defined by the quantities

\begin{eqnarray*}
P(i,j) &=& \frac{\# \{ (u,v) \in E(G): \, \Phi_{G}(u) = i, \Phi_{G}(v) = j\}}{nd} \\
\pi(i) &=& \frac{\# \{ v \in V(G): \Phi_{G}(v) = i\}}{n}\,.
\end{eqnarray*}

As we have noted earlier, if we pick a uniform random directed edge $(\rt_n,\rt'_n)$ of $G$
then $P(i,j) = \pr{\Phi_{G}(\rt_n) = i, \Phi_{G}(\rt'_n)=j}$ and $\pi(i) = \pr{\Phi_{G}(\rt_n) = i}$.
In particular, the matrix $P = \big [P(i,j) \big ]_{\{i,j \in \chi\}}$ and the vector $\pi = (\pi(i))_{\{i \in \chi\}}$
are probability distributions over $\chi^2$ and $\chi$, respectively. Also, the marginal distribution of
$P$ along its rows is $\pi$, and $P$ is a symmetric matrix.

Our chief combinatorial ingredient, in particular, where we use the structural property of $\G$
is in computing the expected number of ordered partitions of $\G$ that admit a given edge profile.
From (\ref{eqn:concentration}) we see that any FIID process $\Phi$ admits, with high probability,
an ordered partition of $V(\G)$ with edge profile close to $P_{\Phi} = \big [\pr{\Phi(\rt) = i, \Phi(\rt') = j} \big]_{\{i,j\}}$
and $\pi_{\Phi} = (\pr{\Phi(\rt) = i})_{\{i\}}$. Thus, if the probability of observing a partition of $\G$ with this
edge profile is vanishingly small then the process $\Phi$ cannot exist on $\T_d$.
Bounding the probability of observing partitions with a given edge profile is difficult. Instead,
we bound the expected number of such partitions, which serves as an upper bound to the probability.

The \emph{entropy} of a discrete probability distribution $\mu$ is
$$H(\mu) = \sum_{x \in \mathrm{support}(\mu)} - \mu(x) \log \mu(x).$$

Let $Z(P,\pi) = Z(P,\pi, G)$ be the number of ordered partitions of $V(G)$ with edge profile $(P,\pi)$. 
\begin{lem} \label{lem:edgeprofile}
For the random graph $\G$,
$$\E{Z(P,\pi,\G)} \leq \mathrm{poly}(n) \times e^{n \big [\frac{d}{2}H(P) - (d-1)H(\pi) \big ]},$$
where $\mathrm{poly}(n)$ is a polynomial factor whose degree and coefficients depend only on $d$ and $q$. 
\end{lem}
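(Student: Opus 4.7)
The plan is a direct first-moment calculation in the configuration model, followed by Stirling's formula. By linearity of expectation and vertex symmetry,
\[
\E{Z(P,\pi,\G)} \;=\; \binom{n}{n\pi(0),\ldots,n\pi(q)} \cdot \pr{\Pi_0 \text{ has edge profile } (P,\pi)},
\]
where $\Pi_0$ is any fixed ordered partition of $[n]$ into cells of the prescribed sizes $n\pi(i)$, and the probability is with respect to the uniform distribution over the $(nd-1)!!$ half-edge pairings. The outer multinomial contributes $e^{n H(\pi)}$ to leading order by Stirling.

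I would then count the pairings realizing the target profile $m(i,j) := nd\,P(i,j)$. This decomposes cleanly. At each cell $i$, distribute its $nd\pi(i)$ half-edges into destination groups of sizes $m(i,j)$, a multinomial $(nd\pi(i))!/\prod_j m(i,j)!$. For each unordered pair $i<j$, choose a bijection between the $m(i,j)$ half-edges at $i$ bound for $j$ and the $m(i,j)$ at $j$ bound for $i$, giving $m(i,j)!$. For each $i$, pair the $m(i,i)$ half-edges of cell $i$ bound back to $i$ amongst themselves, giving $(m(i,i)-1)!!$. Multiplying these and dividing by $(nd-1)!!$ yields an explicit closed-form expression for $\E{Z(P,\pi,\G)}$.

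The final step is to apply $\log k! = k\log k - k + O(\log k)$ to every factorial and collect terms. Using the marginal identity $\sum_j m(i,j) = nd\pi(i)$, the total identity $\sum_{i,j} m(i,j) = nd$, and the entropy identity $\sum_{i,j} m(i,j)\log m(i,j) = nd\log(nd) - nd\,H(P)$, one checks that all $\log(nd)$ terms cancel. The denominator $\log(nd-1)!!$ combined with the within-cell and between-cell pairing factorials contributes $nd\log(nd) - \tfrac{nd}{2}H(P) - nd$ to leading order, while the numerator $\log\binom{n}{n\pi} + \sum_i \log(nd\pi(i))!$ contributes $n H(\pi) + nd\log(nd) - nd\,H(\pi) - nd$. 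Subtracting gives the desired rate $n\bigl[\tfrac{d}{2}H(P) - (d-1)H(\pi)\bigr]$. The $O(\log n)$ Stirling corrections, of which there are $O(q^2)$, together with all constant prefactors depending on $d$ and $q$, are absorbed into $\mathrm{poly}(n)$.

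The delicate bookkeeping point, which I expect to require the most care, is the correct treatment of the diagonal counts: the within-cell pairings contribute $(m(i,i)-1)!! = m(i,i)!/\bigl(2^{m(i,i)/2}(m(i,i)/2)!\bigr)$ rather than $m(i,i)!$, and it is precisely this factor-of-two discrepancy that produces the coefficient $\tfrac{d}{2}$ (rather than $d$) on $H(P)$ in the final bound. Integrality of $n\pi(i)$ and parity of $m(i,i)$ may be assumed without loss of generality, since otherwise the profile is unrealizable and $Z = 0$, making the bound trivial.
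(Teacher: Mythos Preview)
Your proposal is correct and follows essentially the same approach as the paper: the same first-moment decomposition in the configuration model (multinomial for the partition, then distribute half-edges by destination, then count bijections for $i<j$ and perfect matchings for $i=i$, divided by $(nd-1)!!$), followed by Stirling's formula to extract the exponential rate. The paper packages the Stirling step slightly differently---via the estimates $\binom{m}{\rho_i m} = O(m^{-q/2})e^{mH(\rho)}$ and $m!!/\sqrt{m!} = \Theta(m^{-1/4})$ rather than your direct $\log k!$ expansion---but the computations are equivalent, and you have correctly identified the diagonal double-factorial as the source of the $d/2$ coefficient.
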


\begin{proof}
The number of candidates for ordered partitions $\Pi$ on $V(\G)$ that induce the
edge profile $(P,\pi)$ on $\G$ is equal to the number of partitions of $V(\G)$ into
$q+1$ distinguishable cells such that the $i$-th cell has size $\pi(i)n$.
(Of course, we may assume that $(P,\pi)$ is a valid edge profile so that the numbers
$\pi(i) n$ and $P(i,j)nd$ are all integers.)
The number of such partitions is given by the multinomial term $\binom{n}{\pi(i)n; i \in \chi}$.

For any such ordered partition $\Pi$, the probability that its edge profile in $\G$ agrees with $(P,\pi)$ is

\[ \frac{\prod_{i} \binom{nd \pi(i)}{ndP(i,j); j \in \chi} \prod_{\{i,j\}: i \neq j}(ndP(i,j))! \prod_{i}(ndP(i,i)-1)!!}{(nd-1)!!}\,.\]

The first product counts, for every $0 \leq i \leq q$, the number of ways to partition the $nd \pi(i)$
half edges from $\Pi(i)$ into distinguishable sub-cells $\Pi(i,j)$ such that $\# \Pi(i,j) = ndP(i,j)$.
The half edges in $\Pi(i,j)$ are to be paired with half edges from $\Pi(j,i)$.
The second and third products count the number of ways to achieve these pairings.
As each configuration of pairings appear with probability $1/(nd-1)!!$, the formula above follows.

From the linearity of the expectation it follows that $\E{Z(P,\pi, \G)}$ equals
\begin{equation} \label{eqn:expectation}
\binom{n}{\pi(i)n; i \in \chi} \times \frac{\prod_{i} \binom{nd \pi(i)}{ndP(i,j); j \in \chi}
\prod_{\{i,j\}: i \neq j}(ndP(i,j))! \prod_{i}(ndP(i,i)-1)!!}{(nd-1)!!}.
\end{equation}

To analyze the asymptotic behaviour of (\ref{eqn:expectation}) we
use Stirling's approximation: $m! = \sqrt{2\pi m}(m/e)^m(1 + O(1/m))$.
Note that $(m-1)!! = \frac{m!}{2^{(m/2)}(m/2)!}$ for even $m$.
From Stirling's approximation we can easily conclude that
\begin{equation} \label{eqn:stirling}
\binom{m}{\rho_i m; 0 \leq i \leq q} = O(m^{-q/2})e^{mH(\rho_0, \ldots, \rho_q)} \quad \text{and}
\quad \frac{m!!}{\sqrt{m!}} = \Theta(m^{-1/4})\,.
\end{equation}
The $\rho_i m$ are non-negative integers such that $\rho_0 + \ldots + \rho_q = 1$.
The big O and big Theta are constants that are uniform over $m$.
From these two estimates we can simplify (\ref{eqn:expectation}). The polynomial factors in $n$
in the following come from the $m^{-q/2}$ and $m^{-1/4}$ factors in
the estimates above. The first two estimates below are a result of the first estimate in (\ref{eqn:stirling})
about the binomial coefficient. The last inequality comes from replacing double factorials
with the second estimate provided in (\ref{eqn:stirling}).

\begin{itemize}

\item $\binom{n}{\pi(i)n; i \in \chi} \leq \mathrm{poly}(n) \; e^{nH(\pi)}$,\\

\item $\prod_{i} \binom{nd \pi(i)}{ndP(i,j); j \in \chi} \leq
\mathrm{poly}(n) \; e^{nd\,\left [\sum_i \pi(i) H \left (\big \{\frac{P(i,j)}{\pi(i)}; \,0 \leq j \leq q \big \} \right) \right ]}$,\\

\item Finally,  $\frac{\prod_{\{i,j\}: i \neq j}(ndP(i,j))! \prod_{i}(ndP(i,i)-1)!!}{(nd-1)!!}$ is at most
$$\mathrm{poly}(n) \times \left [\frac{\prod_{(i,j): i \neq j}(ndP(i,j))! \prod_{i}(ndP(i,i))!}{(nd)!} \right ]^{1/2}.$$
Stirling's formula and algebraic simplifications imply that the right hand side of the third estimate above
is bounded from above by $\mathrm{poly}(n) \, e^{-\frac{nd}{2}\,H(P)}$.

\end{itemize}
The term $\sum_i \pi(i) H \big (\{ P(i,j)/\pi(i); \, 0 \leq j \leq q\} \big )$ is the conditional entropy of $P$ given $\pi$.
Algebraic simplification shows that it equals $H(P) - H(\pi)$. Consequently,
\[ \E{Z(P,\pi,\G)} \leq \mathrm{poly}(n) \, e^{n[H(\pi) + dH(P) - dH(\pi) - (d/2)H(P)]}\,,\]
and the above simplifies to the formula in the statement of the lemma.
\end{proof}

We now have the ingredients to prove a key inequality about FIID processes on trees.
\begin{thm} \label{thm:FIIDentropy}
Let $\Phi$ be an FIID process on $\T_d$ taking values in $\chi^{V(\T_d)}$. Let $(\rt, \rt')$ be
a directed edge of $\T_d$. Let $P_{\Phi}$ and $\pi_{\Phi}$ be the distributions of
$(\Phi(\rt), \Phi(\rt'))$ and $\Phi(\rt)$, respectively. Then
\begin{equation} \label{eqn:entropyinequality}
\frac{d}{2}H(P_{\Phi}) - (d-1)H(\pi_{\Phi}) \geq 0\,.
\end{equation}
\end{thm}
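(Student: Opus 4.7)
The plan is to prove the inequality by contradiction, combining the concentration estimate \eqref{eqn:concentration} with the first-moment bound of Lemma \ref{lem:edgeprofile} via the Markov inequality. Suppose, toward a contradiction, that
\[
\tfrac{d}{2} H(P_\Phi) - (d-1) H(\pi_\Phi) = -\delta < 0.
\]
Since the functional $(P,\pi) \mapsto \tfrac{d}{2}H(P) - (d-1)H(\pi)$ is continuous in $(P,\pi)$, there is some $\eps > 0$ such that for every admissible edge profile $(P,\pi)$ lying within $\eps$ of $(P_\Phi, \pi_\Phi)$ in the uniform norm we have $\tfrac{d}{2}H(P) - (d-1)H(\pi) \leq -\delta/2$.

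Next I would control the total number of such partitions. Let $Z_\eps(\G)$ denote the number of ordered partitions of $V(\G)$ into $q+1$ distinguishable cells whose edge profile $(P,\pi)$ is within $\eps$ of $(P_\Phi, \pi_\Phi)$. The coordinates of any valid edge profile are rationals with denominators $n$ or $nd$, so the number of valid edge profiles is at most $\mathrm{poly}(n)$ (with degree depending only on $q$ and $d$). Summing the bound of Lemma \ref{lem:edgeprofile} over these profiles yields
\[
\E{Z_\eps(\G)} \leq \mathrm{poly}(n) \cdot e^{-n\delta/2},
\]
so Markov's inequality gives $\pr{Z_\eps(\G) \geq 1} \leq \mathrm{poly}(n) \cdot e^{-n\delta/2} \to 0$ as $n \to \infty$.

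On the other hand, the projection $\Phi_\G$ produces an ordered partition $\Pi$ of $V(\G)$ (into the level sets $\Phi_\G^{-1}(i)$) whose empirical edge profile is precisely some $(P,\pi)$. By \eqref{eqn:concentration} and the convergence of expected local statistics, we have
\[
\pr{\,\max_{i,j}|P(i,j) - P_\Phi(i,j)| > \eps\,} \leq e^{-\eps^2 n / C_{d,r,q}} + \mathrm{err}_n,
\]
and an analogous bound for $\pi$ versus $\pi_\Phi$ follows from the case $\rt_n = \rt_n'$ or by marginalising. Hence with probability tending to $1$ the random partition induced by $\Phi_\G$ has edge profile within $\eps$ of $(P_\Phi, \pi_\Phi)$, which forces $Z_\eps(\G) \geq 1$ with probability tending to $1$. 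This contradicts the Markov bound above, so no such $\delta > 0$ can exist, which proves \eqref{eqn:entropyinequality}.

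The main conceptual point, and the step most prone to error, is ensuring the sum over admissible edge profiles is only polynomial in $n$ so that the exponential decay from Lemma \ref{lem:edgeprofile} survives, together with the minor bookkeeping of replacing the probability estimate from \eqref{eqn:concentration}, which is phrased in terms of directed-edge statistics, by an estimate that simultaneously controls both $P$ and $\pi$. Both are handled by uniform continuity of $H$ on the simplex and a union bound over the finitely many pairs $i,j \in \chi$.
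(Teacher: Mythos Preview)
Your argument is correct and is essentially identical to the paper's own proof: both proceed by contradiction, use continuity of the entropy functional to pass from $(P_\Phi,\pi_\Phi)$ to nearby admissible profiles, bound the number of such profiles polynomially in $n$, apply Lemma~\ref{lem:edgeprofile} together with Markov's inequality to force $\pr{Z_\eps(\G)\geq 1}\to 0$, and then contradict this via the concentration estimate \eqref{eqn:concentration}. The only cosmetic difference is that the paper phrases the first-moment step as a direct upper bound on $\pr{A(\eps)}$ rather than invoking Markov explicitly, and it does not separately discuss controlling $\pi$ since $\pi$ is the row marginal of $P$.
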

The term $\frac{d}{2} H(P_{\Phi}) - (d-1)H(\pi_{\Phi})$ will be denoted the \emph{entropy functional}.

\begin{proof}
For any two matrices $M$ and $M'$, set $|| M - M'|| = \max_{i,j} |M(i,j) - M'(i,j)|$.
For $0 < \eps < 1$ consider the event
$$A(\eps) = \big \{ \G \;\text{admits an ordered partition with edge profile}\; (P,\pi)
\; \text{satisfying}\; ||P - P_{\Phi}|| \leq \eps \big \}.$$
Observe that there are at most $(2nd)^{(q+1)^2}$ edge profiles $(P,\pi)$ that satisfy $ ||P - P_{\Phi}|| \leq \eps$.
Indeed, any such $P$ has entries $P(i,j)$ that are rational numbers of the form $a/nd$ with
$0 \leq a \leq (P_{\Phi}(i,j) + \eps)nd$. Since $P_{\Phi}(i,j) + \eps \leq 2$ for every entry $P_{\Phi}(i,j)$, there are no more than $2nd$
choices for $a$. As there are $(q+1)^2$ entries, the bound follows. Now taking a union bound over
all edge profiles $(P,\pi)$ satisfying $||P - P_{\Phi}|| \leq \eps$ and using Lemma \ref{lem:edgeprofile}
we conclude that
\begin{eqnarray*}
\pr{A(\eps)} &\leq& \sum_{(P,\pi): \, ||P - P_{\Phi}|| \leq \eps} \E{Z(P,\pi, \G)} \\
&\leq& \mathrm{poly}(n) \times \sup_{(P,\pi): \, || P - P_{\Phi}|| \leq \eps} e^{n \left [\frac{d}{2}H(P) - (d-1)H(\pi) \right ]}\,.
\end{eqnarray*}

The map $(P,\pi) \to (d/2)H(P) - (d-1)H(\pi)$ is continuous with respect to the norm $|| \cdot ||$.
Continuity implies that there exists $\delta(\eps) \to 0$ as $\eps \to 0$ such that
$\frac{d}{2}H(P) - (d-1)H(\pi) \leq \frac{d}{2}H(P_{\Phi}) - (d-1)H(\pi_{\Phi}) + \delta(\eps)$
if $|| P - P_{\Phi}|| \leq \eps$. As a result,
\begin{equation} \label{eqn:upbound}
\pr{A(\eps)} \leq \mathrm{poly}(n) \times e^{n[\frac{d}{2}H(P_{\Phi}) - (d-1)H(\pi_{\Phi}) + \delta(\eps)]}\,.
\end{equation}

If $\frac{d}{2}H(P_{\Phi}) - (d-1)H(\pi_{\Phi}) < 0$ then by choosing $\eps$ small enough
we can ensure that $\frac{d}{2}H(P_{\Phi}) - (d-1)H(\pi_{\Phi}) + \delta(\eps) < 0$.
This implies from (\ref{eqn:upbound}) that $\pr{A(\eps)} \to 0$ as $n \to \infty$. However,
this is a contradiction because $\pr{A(\eps)} \to 1$ as $n \to \infty$ due to the concentration
inequality in (\ref{eqn:concentration}). Indeed, for any $\eps > 0$ we have from (\ref{eqn:concentration}) that
$$\pr{A(\eps)} \geq \pr{||P_{\Phi_{\G}} - P_{\Phi}|| \leq \eps} \geq
1 - e^{-\frac{\eps^2 n}{C_{d,r,q}}} - \mathrm{err}_n\,.$$
\end{proof}

\section{Proof of Theorem \ref{thm:corr}} \label{sec:corrprf}

We prove Theorem \ref{thm:corr} before Theorem \ref{thm:main}
because it serves as an introduction to the ideas used to prove
the latter, which are of a more technical nature.
We begin with the proof of the upper bound.

Let $\{\prc_d\}$ be a sequence of FIID percolation processes with $\prc_d$ defined on $\T_d$.
Let $P_{\prc_d}$ be the $2\times 2$ matrix whose entries are $\pr{\prc_d(\rt) = i, \prc_d(\rt')=j}$,
where $(\rt, \rt')$ is a directed edge of $\T_d$ and $i,j \in \{0,1\}$. This is the edge profile of
$\prc_d$, which we can express in terms of the correlation ratio and density.
For convenience we write $\ro = \corr{\prc_d}$ and $\al = \den{\prc_d}$. We have
$$ P_{\prc_d} = \left [ \begin{array}{cc}
1 - 2\al + \ro \al^2 & \al - \ro \al^2 \\
\al - \ro \al^2 & \ro \al^2
\end{array} \right ]\,.$$

Set $h(x) = -x\log x$ for $0 \leq x \leq 1$ with the convention that $h(0) = 0$.
The entropy functional is
\begin{align} \label{eqn:entropyfirstorder}
&\frac{d}{2} H(P_{\prc_d}) - (d-1)H(\pi_{\prc_d}) =\\
\nonumber &\frac{d}{2} \left [h(\ro \al^2) + 2h(\al - \ro \al^2) + h(1 - 2\al + \ro \al^2)]
- (d-1)[h(\al) + h(1 - \al) \right ].
\end{align}
We will first derive an upper bound to the entropy functional and then use the non-negativity
of the entropy functional from Theorem \ref{thm:FIIDentropy} to get a bound on $\al$.
We will use the following properties of $h$.
\begin{lem} \label{lem:hproperty}
The function $h(x) = -x\log x$ for $x > 0$ and $h(0) = 0$ satisfies the following.
\begin{align}
\label{eqn:h1}  h(xy) &= xh(y) + yh(x) \;\;\text{for all}\;\; x,y \geq 0.\\
\label{eqn:h2} x - \frac{x^2 + x^3}{2} &\leq h(1-x) \leq x - \frac{x^2}{2} \;\;\text{for all}\;\; 0 \leq x \leq 1.
\end{align}
\end{lem}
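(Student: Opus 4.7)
The identity (\ref{eqn:h1}) is immediate from $\log(xy) = \log x + \log y$: for $x, y > 0$,
\[
h(xy) \;=\; -xy \log(xy) \;=\; -xy \log x - xy \log y \;=\; y\, h(x) + x\, h(y),
\]
and the boundary cases $x = 0$ or $y = 0$ hold because both sides vanish under the convention $h(0) = 0$.

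For (\ref{eqn:h2}) my plan is to derive an exact series expansion from the Taylor expansion $-\log(1-x) = \sum_{k \geq 1} x^k/k$, valid for $0 \leq x < 1$. Multiplying by $(1 - x)$ and collecting terms, the coefficient of $x^k$ for $k \geq 2$ becomes $\tfrac{1}{k} - \tfrac{1}{k-1} = -\tfrac{1}{k(k-1)}$, so
\[
h(1-x) \;=\; x \;-\; \sum_{k \geq 2} \frac{x^k}{k(k-1)},
\]
an identity that extends to $x = 1$ by continuity since the tail series telescopes to $1$, matching $h(0) = 0$. The upper bound $h(1-x) \leq x - x^2/2$ is then immediate by dropping the non-negative terms with $k \geq 3$.

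For the lower bound I would estimate the tail by using $x^k \leq x^3$ for $k \geq 3$ when $x \in [0,1]$, giving
\[
\sum_{k \geq 3} \frac{x^k}{k(k-1)} \;\leq\; x^3 \sum_{k \geq 3} \frac{1}{k(k-1)} \;=\; \frac{x^3}{2},
\]
where the last equality uses the telescoping identity $\sum_{k \geq 3}\bigl(\tfrac{1}{k-1} - \tfrac{1}{k}\bigr) = \tfrac{1}{2}$. Substituting into the exact expansion yields $h(1-x) \geq x - \tfrac{x^2}{2} - \tfrac{x^3}{2}$, which is the claim. There is no serious obstacle; the only point of caution is handling the endpoints $x = 0$ and $x = 1$, but continuity of $h$ on $[0,1]$ together with the convention $h(0) = 0$ makes both sides of each inequality continuous on the closed interval, so verification on the open interior suffices.
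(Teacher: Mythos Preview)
Your proof is correct and, like the paper's, rests on the Taylor series $-\log(1-x)=\sum_{k\ge 1}x^k/k$. The paper bounds $-\log(1-x)$ above and below and then multiplies by $1-x$, whereas you first derive the exact identity $h(1-x)=x-\sum_{k\ge 2}\frac{x^k}{k(k-1)}$ and truncate; the two arguments are essentially equivalent, though your closed-form series makes both inequalities drop out a little more cleanly.
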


\begin{proof}
The identity in (\ref{eqn:h1}) follows from definition.
The inequalities in (\ref{eqn:h2}) follow from Taylor expansion. They are valid for $x = 1$.
For $0 \leq x < 1$, note that $-\log(1-x) = \sum_k x^k/k$. Hence, $-\log(1-x) \geq x + x^2/2$,
which implies that $h(1-x) \geq x - (1/2)x^2 - (1/2)x^3$. Similarly,
$-\log(1-x) \leq x + (1/2)x^2 + (1/3)x^3 (1 + x + x^2 \cdots)$, which shows that
$-\log(1-x) \leq x + (1/2)x^2 + x^3/(3(1-x))$ for $0 \leq x < 1$. Consequently,
$h(1-x) \leq x - (1/2)x^2 - (1/6)x^3 \leq  x - (1/2)x^2$.
\end{proof}

\begin{lem} \label{lem:Hupperbound}
The entropy function for $\prc_d$ satisfies the following upper bound.
\begin{equation} \label{eqn:Hupperbound}
\frac{d}{2}\,H(P_{\prc_d}) - (d-1)H(\pi_{\prc_d}) \leq -\frac{d}{2}\Psi(\ro) \al^2 + \al + h(\al) + \frac{(2\ro + 1)d}{2} \al^3.
\end{equation}
\end{lem}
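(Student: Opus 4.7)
The plan is to expand the entropy functional in (\ref{eqn:entropyfirstorder}) term by term, applying Lemma \ref{lem:hproperty} with the appropriate sign conventions: for terms that appear with a positive coefficient I would upper bound $h$, and for terms with a negative coefficient I would lower bound $h$. The first move is to simplify the combination $h(\ro\al^2) + 2h(\al-\ro\al^2)$ using the multiplicative identity (\ref{eqn:h1}). Writing $h(\ro\al^2) = 2\ro\al\, h(\al) + \al^2 h(\ro)$ and $h(\al-\ro\al^2) = \al\, h(1-\ro\al) + (1-\ro\al)\,h(\al)$, the coefficients of $h(\al)$ collapse nicely to give the exact identity
\[
h(\ro\al^2) + 2h(\al - \ro\al^2) = 2h(\al) + 2\al\, h(1-\ro\al) + \al^2 h(\ro).
\]
This is the key algebraic reduction that makes the rest of the computation manageable.

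Next, I would apply the Taylor bounds (\ref{eqn:h2}). The upper bound $h(1-x) \leq x - x^2/2$ gives $h(1-\ro\al) \leq \ro\al - \tfrac{1}{2}\ro^2\al^2$ and $h(1-2\al+\ro\al^2) \leq (2\al-\ro\al^2) - \tfrac{1}{2}(2\al-\ro\al^2)^2$. For the term $-(d-1)h(1-\al)$, which contributes positively, I need a lower bound on $h(1-\al)$, namely $h(1-\al) \geq \al - \tfrac{1}{2}\al^2 - \tfrac{1}{2}\al^3$. Substituting all of these into (\ref{eqn:entropyfirstorder}) and collecting the $h(\al)$, linear, quadratic, and cubic contributions yields a polynomial expression in $\al$ with coefficients depending on $d$ and $\ro$. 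In particular, the $h(\al)$ terms cancel down to a single $h(\al)$, the linear terms cancel down to $\al$, and the quadratic terms come out to $-\tfrac{1}{2}\al^2 + \al^2\bigl[h(\ro) + \ro - 2\bigr]\cdot\tfrac{d}{2}$.

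The final step is to recognize the $\Psi$ coefficient. From the definitions $\Psi(\ro) = 1 - \ro + \ro\log\ro$ and $h(\ro) = -\ro\log\ro$ one has $\Psi(\ro) = 1 - \ro - h(\ro)$, so the bracket above equals $-1 - \Psi(\ro)$. The quadratic contribution therefore becomes $-\tfrac{1}{2}\al^2 - \tfrac{d}{2}\Psi(\ro)\al^2$. The cubic terms combine to $\tfrac{1}{2}\bigl[d(2\ro-\ro^2) + (d-1)\bigr]\al^3$, and the crude bound $d(2\ro-\ro^2) + (d-1) \leq (2\ro+1)d$ (since $\ro^2 d + 1 \geq 0$) together with the harmless discard of the non-positive term $-\tfrac{1}{2}\al^2$ gives exactly (\ref{eqn:Hupperbound}). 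No deep obstacle arises; the proof is a careful but essentially mechanical computation, with the only point requiring vigilance being the direction of each inequality and the verification that the residual cubic error is absorbed into the stated coefficient while the leftover negative quadratic remainder can be dropped.
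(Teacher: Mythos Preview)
Your approach is correct and essentially identical to the paper's: the same multiplicative identity for $h(\ro\al^2)+2h(\al-\ro\al^2)$, the same Taylor bounds from (\ref{eqn:h2}) applied with the right signs, and the same discard of the leftover negative pieces. The only cosmetic difference is that you use the sharper bound $h(1-\ro\al)\le \ro\al-\tfrac12\ro^2\al^2$ where the paper uses simply $h(1-\ro\al)\le \ro\al$; the extra $-\ro^2\al^3$ you pick up is then thrown away in your cubic estimate $d(2\ro-\ro^2)+(d-1)\le(2\ro+1)d$ anyway, so nothing is gained or lost.

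One arithmetic slip to fix when you write it up: the quadratic intermediate expression should read $\tfrac{d-1}{2}\al^2+\tfrac{d}{2}\bigl[h(\ro)+\ro-2\bigr]\al^2$, not $-\tfrac12\al^2+\tfrac{d}{2}\bigl[h(\ro)+\ro-2\bigr]\al^2$. (The $\tfrac{d-1}{2}\al^2$ comes from the $(d-1)\cdot\tfrac12\al^2$ piece of the lower bound on $h(1-\al)$.) With the bracket equal to $-1-\Psi(\ro)$ this collapses to $-\tfrac12\al^2-\tfrac{d}{2}\Psi(\ro)\al^2$, which is the final form you state; as written, your intermediate line would instead produce an extra $-\tfrac{d}{2}\al^2$. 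The conclusion is unaffected. You also silently drop the quartic $-\tfrac{d}{4}\ro^2\al^4$; this is fine since it is nonpositive, but it is worth mentioning.
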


\begin{proof}
The entropy functional is the r.h.s.~of (\ref{eqn:entropyfirstorder}), which we analyze term by term.
The following inequality is derived by applying (\ref{eqn:h1}) for the equality and
then using that $h(1-x) \leq x$ from (\ref{eqn:h2}) with $x = \ro \al$ for the inequality.
\begin{eqnarray*}
h(\ro \al^2) + 2h(\al - \ro \al^2) &=& h(\ro)\al^2 + 2\al h(1-\ro \al) + 2h(\al) \\
& \leq & [h(\ro) + 2\ro]\al^2 + 2h(\al).
\end{eqnarray*}
Similarly, the upper bound to $h(1-x)$ from (\ref{eqn:h2}) with $x = 2\al - \ro \al^2$ gives
$$h(1 - 2\al + \ro \al^2) \leq 2\al - \ro \al^2 - \frac{1}{2}(2\al - \ro \al^2)^2\,.$$
Simplification then implies
$$h(\ro \al^2) + 2h(\al - \ro \al^2) + h(1 - 2\al + \ro \al^2) \leq [h(\ro) + \ro -2]\al^2 + 2[\al + h(\al) +\ro \al^3].$$
Note that $h(\ro) + \ro -1 = - \Psi(\ro)$, if we recall that $\Psi(c) = c\log c - c + 1$.
Also, the lower bound to $h(1-x)$ from (\ref{eqn:h2}) gives $h(1-\al) \geq \al -(1/2)\al^2 - (1/2)\al^3$.
Therefore, the r.h.s.~of (\ref{eqn:entropyfirstorder}) satisfies
\begin{align*} \frac{d}{2} \left [h(\ro \al^2) + 2h(\al - \ro \al^2) + h(1 - 2\al + \ro \al^2)]
- (d-1)[h(\al) + h(1 - \al) \right ] &\leq \\
-\frac{d}{2}\Psi(\ro) \al^2 + \al + h(\al) + \frac{(2\ro + 1)d}{2} \al^3&.
\end{align*}
\end{proof}

The top order terms from the r.h.s.~of (\ref{eqn:Hupperbound}) are $-(d/2)\Psi(\ro) \al^2$ and $h(\al)$,
which are of the same order only if $\al$ is of order $(\log d)/d$. Writing $\al = \beta_d \frac{\log d}{d}$
the inequality (\ref{eqn:Hupperbound}) translates to
$$ \frac{d}{2}\,H(P_{\prc_d}) - (d-1)H(\pi_{\prc_d}) \leq
\left ( -\frac{\Psi(\ro)}{2}\beta_d + 1 + \frac{1-\log \log d - \log \beta_d}{\log d} +
\frac{(2\ro + 1)\log d}{2d}\beta_d^2 \right ) \frac{\beta_d \log^2 d}{d}.$$
From the non-negativity of the entropy functional in Theorem \ref{thm:FIIDentropy} and the inequality
above we conclude that
\begin{equation} \label{eqn:Hupbound}
-\frac{\Psi(\ro)}{2}\beta_d + 1 + \frac{1-\log \log d - \log \beta_d}{\log d} +
\frac{(2\ro + 1)\log d}{2d}\beta_d^2 \geq 0.
\end{equation}

Recall that in order to prove Theorem \ref{thm:corr} we must show that for any $\eps > 0$
there is a $d_0$ such that if $d \geq d_0$ then
$\beta_d \leq \frac{2}{\Psi(c)} \big(1 - \frac{\log\log d -1-\log \Psi(c) -\eps}{\log d} \big).$
If $\beta_d \leq 1/\Psi(c)$ then this conclusion is true for all large $d$ even for $\eps = 0$
so long as $c \neq 1$ (in which case $\Psi(c) = 0$). Therefore, we may assume that $\beta_d \geq 1/\Psi(c)$
for every $d$. If $\beta_d \geq 1/\Psi(c)$ then the inequality in (\ref{eqn:Hupbound}) implies that
$$-\frac{\Psi(\ro)}{2}\beta_d + 1 + \frac{1-\log \log d + \log \Psi(c)}{\log d} + \frac{(2\ro + 1)\log d}{2d}\beta_d^2 \geq 0$$
because $\log \beta_d \geq - \log \Psi(c)$. Hence, we must have that $q(\beta_d) \geq 0$, where
$$q(x) =  \frac{(2\ro + 1)\log d}{2d}x^2 -\frac{\Psi(\ro)}{2}x + 1 + \frac{1 + \log \Psi(c) -\log\log d}{\log d}.$$

In the following we prove that under the assumptions of Theorem \ref{thm:corr} $\beta_d$ is at most
the smaller root of the quadratic polynomial $q$, provided that $d$ is sufficiently large. Then we
provide an upper bound for the smaller root of $q$ to conclude the proof of the theorem.

The two roots of $q$ are
\begin{equation} \label{eqn:roots}
\frac{d}{(2\ro + 1)\log d} \left ( \frac{\Psi(\ro)}{2} \pm \sqrt{\frac{\Psi^2(\ro)}{4} -
\frac{(4\ro + 2)}{d}(\log d - \log\log d + \log \Psi(c)+1)} \right).
\end{equation}
We first show that $q$ has two real roots for sufficiently large $d$.
From Lemma \ref{lem:Psibound} below we have that
$$\frac{\Psi(\ro)^2}{4\ro +2} \geq \frac{\Psi(c)^2}{6} \;\;\text{if}\;\; \ro \leq c < 1,\;\;
\text{and}\;\; \frac{\Psi(\ro)^2}{4\ro +2} \geq \frac{\Psi(c)^2}{6c} \;\;\text{if}\;\; \ro \geq c > 1.$$
Consequently, for both the cases $\ro \leq c <1$ or $\ro \geq c > 1$ there exists
a $d_1 = d_1(c)$ such that for $d \geq d_1$,
$$\frac{\Psi^2(\ro)}{4} - \frac{4\ro + 2}{d}(\log d - \log\log d + \log \Psi(c)+ 1) > 0\;\;
\text{as}\;\; \frac{\log d - \log\log d + \log \Psi(c) + 1}{d} \to 0.$$
The quadratic $q$ has two real roots when the inequality above holds because of the formula for the roots
of $q$ given in (\ref{eqn:roots}).

Assuming for $d \geq d_1$, so that $q$ has two real roots, we now show that $\beta_d$ is not
larger than the smaller root of $q$ for all large $d$. Lemma \ref{lem:densityvanish} below implies that
$\beta_d = o(d/\log d)$ as $d \to \infty$. Therefore, given $c$ there exists $d_2$ such that for
$d \geq d_2$ we have $\beta_d \leq \min \{\Psi(c), \Psi(c)/c \}\,\frac{d}{9\log d}$.
But note from (\ref{eqn:roots}) that the larger root of $q$ is at least
$\frac{d}{(2\ro + 1)\log d} \cdot \frac{\Psi(\ro)}{2}$, and this quantity is at least
$\min \{ \Psi(c), \Psi(c)/c\}\, \frac{d}{6\log d}$ due to the lower bound on $\Psi(\ro)/(2\ro +1)$ provided
in Lemma \ref{lem:Psibound} below. Therefore, $\beta_d$ can not be bigger than the larger root of $q$.
Then $\beta_d$ is bounded from above by the smaller root as $q$ is negative between its two roots.

We have deduced that for $d \geq \max \{d_1,d_2\}$, $\beta_d$ is bounded from above
by the smaller root of $q$. An elementary simplification shows that the smaller root equals
$$ \frac{2(1 - \frac{\log\log d}{\log d} + \frac{\log \Psi(c)}{\log d} + \frac{1}{\log d})}{\frac{\Psi(\ro)}{2} +
\sqrt{\frac{\Psi^2(\ro)}{4} - \frac{(4\ro + 2)}{d}(\log d - \log\log d + \log \Psi(c) +1)}}\,.$$
Using $\sqrt{1-x} \geq 1- x$ for $0 \leq x \leq 1$, the above may be bounded to show that for $d \geq \max \{d_1,d_2\}$,
$$ \beta_d \leq \frac{2}{\Psi(\ro)} \left ( 1 + \frac{1 + \log \Psi(c) - \log \log d}{\log d} +
O\big (\frac{(2\ro +1 )\log d}{\Psi^2(\ro)d} \big ) \right).$$
The upper bound of Theorem \ref{thm:corr} now follows because the term
$\frac{(2\ro +1 )\log d}{\Psi^2(\ro)d}$ is of order $O(\log d /d)$ if $\ro$ is uniformly bounded away from 1 in $d$.
This follows from the lower bound on $\Psi(\ro)$ given in Lemma \ref{lem:Psibound} below.

\begin{lem} \label{lem:Psibound}
The function $\Psi$ satisfies the following lower bounds.
\begin{itemize}
\item $\Psi(\ro) \geq \Psi(c)$ if $\ro \leq c < 1$ or if $\ro \geq c > 1$.
\item If $\ro \leq c < 1$ then $\Psi(\ro)/(2\ro+1) \geq \Psi(c)/3$.
\item If $\ro \geq c > 1$ then $\Psi(\ro)/(2\ro+1) \geq \Psi(c)/3c$.
\end{itemize}
\end{lem}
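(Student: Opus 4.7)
The plan is to derive all three inequalities from elementary calculus of $\Psi$ together with the monotonicity of a single auxiliary function $g(x) = \Psi(x)/(2x+1)$. First I would compute $\Psi'(x) = \log x$, which shows that $\Psi$ strictly decreases on $[0,1]$ and strictly increases on $[1,\infty)$, with global minimum $\Psi(1) = 0$. The first bullet is then immediate: the hypothesis of either case places $\ro$ and $c$ on the same monotonicity interval with $\ro$ no closer to $1$ than $c$, so $\Psi(\ro) \geq \Psi(c)$.

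For the second bullet I would combine the first bullet with the trivial estimate $2\ro + 1 < 3$ coming from $\ro \leq c < 1$, yielding
\[ \frac{\Psi(\ro)}{2\ro+1} \;\geq\; \frac{\Psi(c)}{2\ro+1} \;\geq\; \frac{\Psi(c)}{3}. \]

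The third bullet is the one requiring a genuine computation. I would set $g(x) = \Psi(x)/(2x+1)$ on $[1,\infty)$ and, using $\Psi'(x) = \log x$, simplify the numerator of $g'(x)$ to
\[ (2x+1)\log x - 2\Psi(x) \;=\; \log x + 2(x-1), \]
which is strictly positive for $x > 1$. Hence $g$ is increasing on $[1,\infty)$, so $g(\ro) \geq g(c)$ whenever $\ro \geq c > 1$. Finally, $c \geq 1$ gives $2c+1 \leq 3c$, so
\[ \frac{\Psi(\ro)}{2\ro+1} \;=\; g(\ro) \;\geq\; g(c) \;=\; \frac{\Psi(c)}{2c+1} \;\geq\; \frac{\Psi(c)}{3c}. \]

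The only mildly nontrivial step is spotting the identity $(2x+1)\log x - 2\Psi(x) = \log x + 2(x-1)$ that makes the monotonicity of $g$ transparent on $[1,\infty)$; once this is in hand, the rest of the argument reduces to collecting the pieces, and no case analysis beyond the clean split at $x=1$ is needed.
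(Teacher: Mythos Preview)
Your proof is correct and essentially follows the paper's approach: the first two bullets are handled identically (monotonicity of $\Psi$ from $\Psi'(x)=\log x$, then the trivial bound $2\ro+1\le 3$). For the third bullet the paper instead first applies $2\ro+1\le 3\ro$ and then shows the auxiliary function $\Psi(x)/x=\log x-1+1/x$ is increasing on $[1,\infty)$, whereas you show $\Psi(x)/(2x+1)$ itself is increasing and apply $2c+1\le 3c$ at the end; both routes are equally short and yield the same conclusion.
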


\begin{proof}
Differentiating $\Psi(x)$ shows that it is decreasing for $x \in [0,1]$ and
increasing for $x \in [1,\infty)$. This implies the claim in the first bullet.
Now, suppose that $\ro \leq c < 1$. Then $\Psi(\ro) \geq \Psi(c)$ and $2\ro + 1 \leq 3$,
so $\Psi(\ro)/(2\ro + 1) \geq \Psi(c)/3$, as required.
Next, suppose that $\ro \geq c > 1$. Then $2\ro + 1 \leq 3 \ro$ and so $\Psi(\ro)/(2\ro + 1) \geq \Psi(\ro)/(3\ro)$.
Observe that $\Psi(\ro)/\ro = \log \ro - 1 + 1/\ro$ and the function $x \to \log x - 1 + 1/x$ is
increasing if $x \geq 1$. Hence, $\Psi(\ro)/ (2\ro +1) \geq \Psi(c)/3c$ if $\ro \geq c$.
\end{proof}

\begin{lem} \label{lem:densityvanish}
Suppose $\ro = \corr{\prc_d}$ is bounded away from 1 uniformly in $d$, i.e.,
$\ro \notin (1 - \delta, 1+ \delta)$ for all $d$ and some $\delta > 0$.
Then $\al = \den{\prc_d} \to 0$ as $d \to \infty$. Equivalently, $\beta_d = o(\log d /d)$
as $d \to \infty$.
\end{lem}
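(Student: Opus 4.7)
The plan is to use the entropy inequality of Theorem \ref{thm:FIIDentropy} directly, bypassing the polynomial upper bound of Lemma \ref{lem:Hupperbound}. The idea is that Theorem \ref{thm:FIIDentropy} already forces $\prc_d(\rt)$ and $\prc_d(\rt')$ to become asymptotically independent as $d \to \infty$, so any subsequential limit of $\ro = \corr{\prc_d}$ along a subsequence where $\al$ stays bounded away from zero must equal $1$, contradicting the hypothesis.

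First I would rewrite Theorem \ref{thm:FIIDentropy} as
\[ 0 \leq 2H(\pi_{\prc_d}) - H(P_{\prc_d}) \leq \frac{2}{d}\,H(\pi_{\prc_d}) \leq \frac{2\log 2}{d}, \]
where the leftmost inequality is subadditivity of entropy (with equality iff the two marginals are independent, using that $\prc_d(\rt)$ and $\prc_d(\rt')$ share the marginal $\pi_{\prc_d}$ by invariance), the middle one is the entropy inequality after rearranging, and the rightmost uses that $\pi_{\prc_d}$ is supported on two atoms. Hence $2H(\pi_{\prc_d}) - H(P_{\prc_d}) \to 0$ as $d \to \infty$.

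Next I would argue by contradiction. Suppose $\al$ does not tend to zero; pass to a subsequence along which $\al \to \al^* > 0$. The joint probability $c := \pr{\prc_d(\rt) = \prc_d(\rt') = 1}$ lies in $[0, \al]$, so after a further subsequence $c \to c^* \in [0, \al^*]$. Since the full edge profile $P_{\prc_d}$ on $\{0,1\}^2$ is determined by the pair $(\al, c)$ via symmetry and normalization, $(P_{\prc_d}, \pi_{\prc_d}) \to (P^*, \pi^*)$ with $\pi^* = (1-\al^*, \al^*)$.

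The finishing step is essentially routine. By continuity of $H$ on the simplex, the display passes to the limit and yields $H(P^*) = 2H(\pi^*)$. The equality case of subadditivity forces $P^* = \pi^* \otimes \pi^*$, so $c^* = (\al^*)^2$, and therefore $\ro = c/\al^2 \to 1$ along the subsequence, contradicting the hypothesis that $\ro$ avoids $(1-\delta, 1+\delta)$. Hence $\al \to 0$; since $\al = \beta_d(\log d)/d$, this is equivalent to $\beta_d = o(d/\log d)$. No step is really a serious obstacle here: the only mildly delicate ingredient is invoking the equality characterization in subadditivity of entropy, which is standard.
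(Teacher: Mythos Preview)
Your argument is correct and is essentially the same approach as the paper's: both pass to a subsequence with $\al \to \al^* > 0$, take the limit of the entropy inequality (the paper divides (\ref{eqn:entropyfirstorder}) by $d$, you rearrange Theorem~\ref{thm:FIIDentropy} into $0 \le 2H(\pi)-H(P) \le \tfrac{2}{d}H(\pi)$), and then use that equality in subadditivity of entropy forces the limiting edge profile to be the product $\pi^*\otimes\pi^*$, whence $\ro \to 1$ along the subsequence. Your presentation is a bit cleaner in that it invokes the equality case of subadditivity directly rather than first writing out the explicit formula and differentiating in $\rho_\infty$ (the paper itself remarks parenthetically that its calculus step amounts to this); you also correctly write $\beta_d = o(d/\log d)$, fixing the typo in the lemma statement.
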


\begin{proof}
Suppose otherwise, for the sake of a contradiction. By
moving to a subsequence in $d$ we may assume that
$\al \to \alpha_{\infty} > 0$ and $\ro \to \rho_{\infty} \neq 1$.
Then dividing (\ref{eqn:entropyfirstorder}) through by $d$ and
taking the limit in $d$ we see that
$$\frac{1}{2} \left [h(\rho_{\infty} \alpha_{\infty}^2) + 2h(\alpha_{\infty} - \rho_{\infty}\alpha_{\infty}^2)
+ h(1 - 2\alpha_{\infty} + \rho_{\infty}\alpha_{\infty}^2) \right ] -h(\alpha_{\infty}) - h(1-\alpha_{\infty}) \geq 0\,.$$

It is easy to show by differentiating with respect to $\rho_{\infty}$ that the l.h.s.~of
the above is uniquely maximized at $\rho_{\infty} = 1$, for any fixed $\alpha_{\infty} > 0$. However,
when $\rho_{\infty} = 1$, the three summands involving $\rho_{\infty}$ above add up to give
$2[h(\alpha_{\infty}) + h(1-\alpha_{\infty})]$. (This conclusion, of course, follows from subadditivity
of entropy and the fact that the entropy of a pair of distributions is maximal when they are
independent.) Thus, when $\rho_{\infty} = 1$ the l.h.s.~of the above inequality equals zero, but is otherwise
negative for $\rho_{\infty} \neq 1$ and fixed $\alpha_{\infty} > 0$. This contradicts the inequality above
and allows us to conclude that $\al \to 0$.
\end{proof}

\subsection{Construction of large density percolation processes with given correlation ratio} \label{sec:corrlwbdd}

To construct processes as in the second statement of Theorem \ref{thm:corr} we interpolate between Bernoulli
site percolation on $\T_d$ and FIID independent sets of large density. Recall that there exist FIID
independent sets $\mathbf{I}_d$ on $\T_d$ with density $(1-o(1))\frac{\log d}{d}$ as $d \to \infty$ \cite{LW}.
To construct $\mathbf{Z}_d$ from Theorem \ref{thm:corr} we
first fix parameters $0 < p < 1$ and $x > 0$. Using two independent sources of random
labellings of $\T_d$, we generate $\mathbf{I}_d$ and a Bernoulli percolation,
$\ber$, having density $x \frac{\log d}{d}$. Then for each vertex $v$ we
define $\mathbf{Z}_d(v) = \mathbf{I}_d(v)$ with probability $p$, or $\mathbf{Z}_d(v) = \ber(v)$
with probability $1-p$. The decisions to choose between $\mathbf{I}_d$ or $\ber$ are made
independently between the vertices. Then $\mathbf{Z}_d$ is an FIID percolation on $\T_d$.
Since $\corr{\mathbf{I}_d} = 0$ and $\corr{\ber} = 1$, it is easy to calculate that

\begin{eqnarray*}
\den{\mathbf{Z}_d} &=& p \, \den{\mathbf{I}_d} + (1-p) \, \den{\ber}, \;\text{and}\\
\corr{\mathbf{Z}_d} &=& \frac{(1-p)^2\den{\ber}^2 + 2p(1-p)\den{\mathbf{I}_d}\den{\ber}}{\den{\mathbf{Z}_d}^2}\,.
\end{eqnarray*}

In the following calculation we ignore the $1-o(1)$ factor from $\den{\mathbf{I}_d}$ for tidiness.
It does not affect the conclusion and introduces the $o(1)$ term in the statement of the
lower bound for $\den{\mathbf{Z}}$ from Theorem \ref{thm:corr}.
Continuing from the above we see that
$\den{\mathbf{Z}_d} = (p + (1-p)x)\frac{\log d}{d}$ and
$\corr{\mathbf{Z}_d} = \frac{(1-p)^2x^2 + 2p(1-p)x}{(p + (1-p)x)^2}$.
Setting the correlation ratio equal to $c$ and solving for $x$ in terms of $p$ gives
$x = \frac{p}{1-p} (\frac{1}{\sqrt{1-c}} - 1)$. Consequently, the density of $\mathbf{Z}_d$
is $\frac{p}{\sqrt{1-c}} \cdot \frac{\log d}{d}$. By letting $p \to 1$ we deduce the required conclusion.

\section{Proof of Theorem \ref{thm:main}} \label{sec:mainprf}

Let $\prc_d$ be a sequence of FIID percolation processes with $\prc_d$ defined on $\T_d$
and the correlation ratios $\corr{\prc_d} \to 0$ as $d \to \infty$. We may conclude from
Theorem \ref{thm:corr} that $\limsup_{d \to \infty} \frac{\den{\prc_d}}{(\log d)/d} \leq 2$.
In order to prove Theorem \ref{thm:main}, where the constant 2 is replace by 1,
we also use the entropy inequality in Theorem \ref{thm:FIIDentropy}, but we apply
it to many copies of a given percolation process $\prc_d$, where the copies are coupled in a
particular way. In turns out that the entropy inequality provides better bounds when it is applied
to many copies of $\prc_d$. This important idea is borrowed from statistical physics where it
was initially observed through the \emph{overlap structure} of the Sherrington-Kirkpatrick model \cite{Tal}.

To derive bounds on $\den{\prc_d}$ from the entropy inequality applied to
several copies of $\prc_d$ we need to find a upper bound to the general
entropy functional in terms of the density of each colour class (the analogue of
Lemma \ref{lem:Hupperbound}). This bound follows from an important convexity argument.
We begin with the derivation of this upper bound. Once it is established, which is presented in
Lemma \ref{lem:maxentropy}, the remainder of the proof follows the same argument
as what has been done for independent sets in \cite[Section 2.1]{RV}.

\subsection{Upper bound for the entropy functional} \label{sec:entropybound}

The subadditivity of entropy implies that for any edge profile $(P,\pi)$ the entropy
$H(P) \leq 2H(\pi)$, which in turn implies that $(d/2)H(P) - (d-1)H(\pi) \leq H(\pi)$.
However, this is useless towards analyzing the entropy inequality since $H(\pi) \geq 0$.
In fact, this upper bound is sharp if and only if $P(i,j) = \pi(i) \pi(j)$ for every pair $(i,j)$.
However, a high density percolation process $\prc_d$ whose correlation ratio satisfies
the hypothesis of Theorem \ref{thm:main} cannot have an edge profile with this property.
In order to derive a suitable upper bound we have to bound the entropy
functional subject to constraints on the edge profile induced by small correlation ratios.

We derive such a bound from convexity arguments that take into account
the constraints put forth by small correlation ratios. Although our bound is not sharp, it appears
to become sharp as the degree $d \to \infty$. This is the reason as to why we get
an asymptotically optimal bound on the density as $d \to \infty$. When the correlation ratios do
not converge to zero the bound becomes far from sharp and we cannot get bounds on 
the density that improve upon Theorem \ref{thm:corr} in a general setting.

Before proceeding with the calculations we introduce some notation and provide the setup.
Let $(P,\pi)$ be an edge profile such that the entries of $P$ are indexed by $\chi^2$, where $\# \chi = q + 1 \geq 2$.
Suppose that there is a subset $\Lambda \subset \chi^2$ (which represents the pairs $(i,j)$ where $P(i,j)$ is
very small) with the following properties.

\begin{enumerate}
\item $\Lambda$ is symmetric, that is, if $(i,j) \in \Lambda$ then $(j,i) \in \Lambda$.

\item There exists an element $0 \in \chi$ such that $\Lambda$ does not contain
any of the pairs $(0,j)$ for every $j \in \chi$.

\item There are positive constants $K \leq 1/(eq)$ and $J$ such that $P(i,j) \leq K$
for every $(i,j) \in \Lambda$ and $\pi(i) \leq J$ for every $i \neq 0$ ($e = 2.78\ldots$).
\end{enumerate}

For $i \in \chi$, set $\Lm_i = \{ j \in \chi : (i,j) \in \Lm \}$ and define the quantities
\begin{align*}
\pi(\Lm_i) &= \sum_{j \in \Lm_i} \pi(j) \\
P(\Lm_i) &= \sum_{j \in \Lm_i} P(i,j) \\
\pi^2(\Lm) &= \sum_i \pi(i) \pi(\Lm_i) = \sum_{(i,j) \in \Lm} \pi(i) \pi(j) \,.
\end{align*}

In our application in Section \ref{sec:mainproof} both $K$ and $J$ will converge to zero as
$d \to \infty$ while $\chi$ will remain fixed. So the bound stipulated on $K$ will be satisfied for large $d$.

\begin{lem} \label{lem:maxentropy}
With the setup as above the following inequality holds for every $d$:
$$\frac{d}{2}H(P)  - (d-1)H(\pi) \leq H(\pi) - \frac{d}{2}\pi^2(\Lm) + q^2 \left( dK + dK \log \left (\frac{J^2}{K} \right ) \right ).$$
\end{lem}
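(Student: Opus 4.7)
The plan is to rephrase the inequality as a lower bound on mutual information and then estimate that information by splitting it into contributions from $\Lm$ and its complement. Since $P$ has both marginals equal to $\pi$, one has $H(P) = 2H(\pi) - I(X;Y)$, where $I(X;Y) = \sum_{i,j} P(i,j)\log \tfrac{P(i,j)}{\pi(i)\pi(j)}$, and substituting converts the target inequality into the equivalent lower bound
\[
I(X;Y) \;\geq\; \pi^2(\Lm) - 2q^2 K\bigl(1 + \log(J^2/K)\bigr).
\]
I would then write $I(X;Y) = S_{\Lm^c} + S_\Lm$, where each $S_A$ is the part of the defining sum over pairs $(i,j) \in A$. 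On $\Lm^c$ the elementary bound $x\log(x/y) \geq x - y$ (essentially $\log t \geq 1 - 1/t$) gives at once $S_{\Lm^c} \geq (1 - P(\Lm)) - (1 - \pi^2(\Lm)) = \pi^2(\Lm) - P(\Lm)$.

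The main work is bounding $S_\Lm$. Here the key pointwise estimate is
\[
x\log(x/y) \;\geq\; x\log(K/y) - (K - x), \qquad x \in [0,K],\ y > 0,
\]
equivalent to the one-line inequality $u\log u + 1 - u \geq 0$ on $[0,1]$ via $u = x/K$. Applying this with $x = P(i,j) \leq K$ and $y = \pi(i)\pi(j)$ on $\Lm$, and using that conditions 1 and 2 force $i,j \neq 0$ so that $\pi(i)\pi(j) \leq J^2$ and hence $\log(K/y) \geq \log(K/J^2)$, summation over $\Lm$ produces
\[
S_\Lm \;\geq\; P(\Lm)\bigl(1 + \log(K/J^2)\bigr) - |\Lm|\,K.
\]
Adding the two halves yields $I(X;Y) \geq \pi^2(\Lm) + P(\Lm)\log(K/J^2) - |\Lm|\,K$.

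To finish, I would use $|\Lm| \leq q^2$ (since $\Lm \subset (\chi \setminus \{0\})^2$ by symmetry and condition 2), $P(\Lm) \leq |\Lm|K \leq q^2 K$, and a case split on the sign of $\log(K/J^2)$ to absorb the two nuisance terms into the stated error of order $q^2 K (1 + \log(J^2/K))$; the factor of two in front of the error in the lemma provides enough slack to make this case analysis painless. The role of the assumption $K \leq 1/(eq)$ is to keep the per-pair estimates in the regime $x \leq K \leq 1/e$ where the tangent inequality is applied, and to furnish $|\Lm|K \leq q/e$ as a uniform-in-$q$ size bound. The main obstacle I foresee lies in this final sign management: the naive bound $x\log(x/y) \geq x - y$ alone loses the crucial logarithmic factor $\log(J^2/K)$, so the tangent-at-$K$ inequality is essential, and combining its output with the $\Lm^c$ contribution requires tracking carefully whether $K \lessgtr J^2$.
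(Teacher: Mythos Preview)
Your proposal is correct and takes a genuinely different route from the paper. The paper proves the bound by writing $H(P) = H(\pi) + \sum_j \sum_i \pi(i)\,h\!\bigl(P(i,j)/\pi(i)\bigr)$ and then, for each fixed $j$, applying Jensen's inequality separately to the sums over $i \in \Lm_j$ and $i \notin \Lm_j$; this produces several intermediate terms (involving $P(\Lm_j)$, $\pi(\Lm_j)$, $h(\pi(j)-P(\Lm_j))$, etc.) which are then estimated one by one. Your reformulation via mutual information is considerably cleaner: once you observe that the target is equivalent to $I(X;Y) \geq \pi^2(\Lm) - 2q^2K(1+\log(J^2/K))$, the pointwise bound $x\log(x/y) \geq x-y$ on $\Lm^c$ and the tangent-at-$K$ bound on $\Lm$ dispatch it in a few lines. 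In the relevant regime $K \leq J^2$ your argument in fact yields the sharper error $q^2K(1+\log(J^2/K))$, so the factor of two is genuine slack as you say.

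Two minor remarks. First, your explanation of the hypothesis $K \leq 1/(eq)$ is not quite right: your tangent inequality $x\log(x/y) \geq x\log(K/y)-(K-x)$ holds for all $x\in[0,K]$ without any size restriction on $K$, so your proof does not actually use this hypothesis. It is the paper's argument that needs it, to ensure $P(\Lm_j) \leq qK \leq 1/e$ so that $h$ is monotone there. Second, the case $K > J^2$ is not as painless as you suggest: your bound $I(X;Y) \geq \pi^2(\Lm) - q^2K$ only implies the target when $\log(J^2/K) \geq -\tfrac12$. This is harmless for the paper's application (where $K = \eps_d J^2$ with $\eps_d \to 0$), and in fact the lemma as literally stated can fail once $K > eJ^2$ (take $\Lm=\emptyset$ and $P$ a product measure, so $I(X;Y)=0$ while the claimed error term is negative). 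So the issue lies with the lemma's stated generality, not with your argument.
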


\begin{proof}
We use the concavity of the function $h(x) = -x \log x$.
Using the identity $h(xy) = xh(y) + yh(x)$ from Lemma \ref{lem:hproperty} we get
\[H(P) = \sum_{(i,j)} h \left (\pi(i)\cdot \frac{P(i,j)}{\pi(i)} \right ) = \sum_{(i,j)} \pi(i) h \left (\frac{P(i,j)}{\pi(i)} \right ) + H(\pi)\,.\]

To bound $\sum_i \pi(i) h(\frac{P(i,j)}{\pi(i)})$ we consider the
summands for $i \in \Lm_j$ and $i \notin \Lm_j$ separately.
Jensen's inequality applied to $h(x)$ implies that
\begin{align} \label{eqn:convexitybdd}
\sum_{i \in \Lm_j} \pi(i)\, h \left (\frac{P(i,j)}{\pi(i)} \right ) \leq \pi(\Lm_j) \, h \left (\frac{P(\Lm_j)}{\pi(\Lm_j)} \right ), \; \text{and} \\
\label{eqn:convexitybdd2}
\sum_{i \notin \Lm_j} \pi(i)\,h \left (\frac{P(i,j)}{\pi(i)} \right ) \leq (1-\pi(\Lm_j))\, h \left (\frac{\pi(j) - P(\Lm_j)}{1-\pi(\Lm_j)} \right )\,.
\end{align}

Since $\pi(\Lm_0) = P(\Lm_0) = 0$, when $j=0$ the sum in (\ref{eqn:convexitybdd}) is empty
the sum in (\ref{eqn:convexitybdd2}) equals $h(\pi(0))$. We analyze the term on the r.h.s.~of (\ref{eqn:convexitybdd2})
when $j \neq 0$.
\begin{align*}
(1-\pi(\Lm_j))\, h \left (\frac{\pi(j) - P(\Lm_j)}{1-\pi(\Lm_j)} \right ) =& - (\pi(j) - P(\Lm_j)) \log \left ( \frac{\pi(j) - P(\Lm_j)}{1-\pi(\Lm_j)} \right )\\
=& \, h(\pi(j) - P(\Lm_j)) + (\pi(j) - P(\Lm_j)) \log (1- \pi(\Lm_j)).
\end{align*}

Notice that the term
$$h(\pi(j)-P(\Lm_j)) = h(\pi(j) \cdot \left (1-\frac{P(\Lm_j)}{\pi(j)}) \right ) = \left(1-\frac{P(\Lm_j)}{\pi(j)} \right)h(\pi(j))
+ \pi(j)\,h \left (1-\frac{P(\Lm_j)}{\pi(j)} \right ).$$
The first of the two summands on the very right, $(1-\frac{P(\Lm_j)}{\pi(j)})\,h(\pi(j))$,
equals $h(\pi(j)) + P(\Lm_j) \log (\pi(j))$. The second,  $\pi(j)h \big (1-\frac{P(\Lm_j)}{\pi(j)}\big)$, satisfies
$\pi(j)h \big (1-\frac{P(\Lm_j)}{\pi(j)} \big) \leq P(\Lm_j)$ because $h(1-x) \leq x$ from (\ref{eqn:h2}). Thus,
$$h \big (\pi(j)-P(\Lm_j) \big) \leq h(\pi(j)) + P(\Lm_j) \log (\pi(j)) + P(\Lm_j)\;\; \text{when}\;\; j \neq 0.$$

Now consider $(\pi(j) - P(\Lm_j)) \log (1- \pi(\Lm_j))$. Since $\log(1-x) \leq -x$ for $0 \leq x \leq 1$,
we have that $\log (1- \pi(\Lm_j)) \leq - \pi(\Lm_j)$. As a result,
$$\left (\pi(j) - P(\Lm_j) \right ) \log (1- \pi(\Lm_j)) \leq - \pi(j) \pi(\Lm_j) + P(\Lm_j).$$

We may now conclude that for $j \neq 0$,
\begin{equation} \label{eqn:complicated}
(1-\pi(\Lm_j))\, h \left (\frac{\pi(j) - P(\Lm_j)}{1-\pi(\Lm_j)} \right ) \leq h(\pi(j)) - \pi(j) \pi(\Lm_j)
 + P(\Lm_j) \log(\pi(j)) + 2P(\Lm_j).
\end{equation}

We may also simplify the r.h.s.~of (\ref{eqn:convexitybdd}):
$\pi(\Lm_j) h \big (\frac{P(\Lm_j)}{\pi(\Lm_j)} \big ) = P(\Lm_j) \log \big ( \frac{\pi(\Lm_j)}{P(\Lm_j)} \big)$.
Therefore, we deduce from (\ref{eqn:convexitybdd}), (\ref{eqn:convexitybdd2}) and (\ref{eqn:complicated})
that if $j \neq 0$ then
\begin{equation} \label{eqn:intermediatebdd}
\sum_i \pi(i) h \left (\frac{P(i,j)}{\pi(i)} \right ) \leq h(\pi(j)) - \pi(j) \pi(\Lm_j)
 + 2P(\Lm_j) + P(\Lm_j) \log \left (\frac{\pi(j) \pi(\Lm_j)}{P(\Lm_j)} \right).
\end{equation}

The last two summands on the r.h.s.~of (\ref{eqn:intermediatebdd}) contribute to the big O error term involving $J$ and $K$,
which we now demonstrate. We have that the second last summand $P(\Lm_j) \leq qK$ by hypothesis (3).
Consider the last summand in (\ref{eqn:intermediatebdd}). We have that $\pi(j) \leq J$ for $j \neq 0$ by hypothesis (3).
Moreover, $\pi(\Lm_j) \leq qJ$ for $j \neq 0$ because $0 \notin \Lm_j$ by hypothesis (2), so each $\pi(i) \leq J$
for $i \in \Lm_j$ by hypothesis $(3)$, and $\# \Lm_j \leq q$. Hence, $\pi(j) \pi(\Lm_j) \leq qJ^2$ for $j \neq 0$.
The function $h(x)$ is increasing for $x \leq 1/e$. Hence, as $P(\Lm_j) \leq qK \leq 1/e$ by hypothesis (3), we have
\begin{eqnarray*}
P(\Lm_j) \log \left (\frac{\pi(j) \pi(\Lm_j)}{P(\Lm_j)} \right ) &=& h(P(\Lm_j)) + P(\Lm_j) \log \big (\pi(j) \pi(\Lm_j) \big ) \\
&\leq& h(qK) + qK \log (qJ^2)\\
&=& qK \log \left (\frac{J^2}{K} \right )\,.
\end{eqnarray*}
Applying these bounds to the last two summands on the r.h.s.~of (\ref{eqn:intermediatebdd})
we get that that for $j \neq 0$,
$$ \sum_i \pi(i) h \left (\frac{P(i,j)}{\pi(i)} \right) \leq h(\pi(j)) - \pi(j)\pi(\Lm_j) + 2qK + qK \log \left (\frac{J^2}{K} \right).$$

Summing over all $j$ now gives
\begin{eqnarray*}
\sum_{(i,j)} \pi(i) h \left (\frac{P(i,j)}{\pi(i)} \right) &\leq& 
h(\pi(0)) + \sum_{j \neq 0} [h(\pi(j)) - \pi(j) \pi(\Lm_j)] + 2q^2 \left ( K + K \log \left (\frac{J^2}{K} \right) \right)\\
&=& H(\pi) - \pi^2(\Lm) + 2q^2 \left ( K + K \log \left (\frac{J^2}{K} \right) \right)\,.
\end{eqnarray*}
Consequently, $H(P) \leq 2H(\pi) - \pi^2(\Lm) + 2q^2 \big ( K + K \log(J^2/K) \big)$, and this implies the lemma.
\end{proof}

\subsection{Tools and setup of the proof} \label{sec:coupling}

In this section we set up important tools and terminology that we will
use to prove Theorem \ref{thm:main} in the following section.

\subsubsection{Many overlapping processes from a given percolation process} \label{sec:overlap}

To prove Theorem \ref{thm:main} we construct a coupled sequence of
exchangeable percolation processes $ \prc^0_d, \prc^1_d, \ldots$ from a given percolation
process $\prc_d$. We then apply the entropy inequality to the FIID process
$(\prc^1_d, \ldots, \prc^{k}_d)$ to derive bounds on $\den{\prc_d}$ that improve
upon Theorem \ref{thm:corr} by taking $k$ to be arbitrarily large.
The process $\prc^i_d$ is defined by varying the random labels that serve
as input to the factor of $\prc_d$. The labels are varied by first generating a random
subset of vertices of $\T_d$ via a Bernoulli percolation and then repeatedly re-randomizing
the labels over this subset to produce the random labelling for the $\prc^i_d$.
The processes are coupled since their inputs agree on the complement of this random subset.
The details follow.

Let $\iid^0_d, \iid^1_d, \iid^2_d, \ldots$ be independent
random labellings of $\T_d$. Fix a parameter $0 \leq p \leq 1$, and let
$\ber^{p}$ be a Bernoulli percolation on $\T_d$ having density $p$.
If $f_{\prc_d}$ is the factor associated to $\prc_d$ then the FIID percolation process
$\prc^i_d$ has factor $f_{\prc_d}$ but the random labelling used for it, say
$\cplbl^i = (W^i(v), v \in V(\T_d))$, is defined by $W^i(v) = X^i_d(v)$ if $\ber^p(v) = 1$ and
$W^i(v) = X^0(v)$ if $\ber^p(v) = 0$.

For $k \geq 1$ consider the FIID process $(\prc_d^1,\ldots,\prc^k_d)$, which takes
values in $\chi^{V(\T_d)}$ where $\chi = \{0,1\}^k$. By identifying elements of $\{0,1\}^k$ with
subsets of $\{1,\ldots,k\}$ the edge profile of this process can be described as follows.
Let $(\rt,\rt')$ be a fixed edge of $\T_d$. For subsets $S, T \subset \{1, \ldots, k\}$,

\begin{eqnarray*}
\pi(S) &=& \pr{\prc^i_d(\rt) = 1 \;\text{for}\; i \in S \;\text{and}\; \prc^i_d(\rt) = 0 \;\text{for}\; i \notin S} \\
P(S,T) &=& \mathbb{P} \Big [ \prc^i_d(\rt) = 1, \prc^j_d(\rt') = 1 \;\text{for}\; i \in S, j \in T \\
&\;&\text{and}\; \prc^i_d(\rt) = 0, \prc^j_d(\rt') = 0 \;\text{for}\; i \notin S, j \notin T \Big ]
\end{eqnarray*}

Observe that $\pi(\{i\}) \leq \den{\prc_d}$ for every $i$, and if $S \cap T \neq \emptyset$ then
for any fixed $i \in S \cap T$ we have
\begin{equation} \label{eqn:pstbound}
P(S,T) \leq \pr{\prc^i_d(\rt) = 1, \prc^i_d(\rt') = 1} \leq \corr{\prc_d}\, \den{\prc_d}^2.
\end{equation}

We describe some important properties of the coupled processes that will be used to
complete the proof of Theorem \ref{thm:main}. Since the statement of the theorem
is about the scaled density $\pi(\{1\}) \frac{d}{\log d}$, we find a
probabilistic interpretation for these scaled quantities. We use the coupling
to find such an interpretation for the ratio $\pr{\prc^i_d(\rt) = 1 \;\text{for every}\; i \in S}/\den{\prc_d}$.
Define the normalized density $\alpha(S)$ for any non empty and finite set $S \subset \{1,2,3, \ldots \}$ by
\begin{equation} \label{eqn:alpha}
\alpha(S) \frac{\log d}{d} = \pr{\prc^i_d(\rt) = 1 \;\text{for every}\; i \in S}\,.
\end{equation}

Since the coupled percolation processes are exchangeable, $\alpha(S) = \alpha(\{1, \ldots, \# S\})$ for all $S$.
For convenience, we write $\alpha_{i,d,p} = \alpha(\{1,\ldots, i\})$ and call these the
\emph{intersection densities} of the processes $\prc^1_d, \prc^2_d, \ldots$
(although, these densities are normalized by the factor of $(\log d)/d$).
When we apply the entropy inequality to $(\prc^1_d, \ldots, \prc^{k}_d)$
we will get an expression in terms of the intersection densities of the $k$ percolation processes.
In order to analyze this expression we have to realize the ratios $\alpha_{i,d,p}/\alpha_{1,d}$ as the moments
of a random variable $Q_{d,p}$. We define $Q_{d,p}$ in the following.

\subsubsection{The stability variable} \label{sec:stability}

The random variable $Q_{d,p}$ is defined on a new probability space, which is obtained from the
original probability space generated by the random labels $\iid_d$ by essentially restricting
to the support of the factor $f_{\prc_d}$. Formally, the new sample space is the set $\{ \prc^0_d(\rt) \equiv 1\}$
considered as a subset of the joint sample space of $\iid_d^0, \iid_d^1, \ldots$, and $\ber^p$.
The new $\sigma$-algebra is the restriction of the $\sigma$-algebra generated by $\iid_d^0, \iid_d^1,\ldots$,
and $\ber^{p}$ to $\{\prc^0_d(\rt) \equiv 1\}$. The new expectation operator $\mathbb{E}^*$ is defined by
\[ \Est{U} = \frac{\E{\prc^0_d(\rt) \,U}}{\E{\prc^0_d(\rt)}} \]
for any measurable random variable $U$ defined on $\{ \prc^0_d(\rt) \equiv 1\}$.
In the following, we write $\prc^i$ to stand for $\prc^i_d$.

\begin{lem} \label{lem:restriction}
If $\mathcal{F}$ is a $\sigma$-algebra containing the $\sigma$-algebra generated by $\prc^0(\rt)$,
then for any random variable $U$ defined on the original probability space we have
\[ \Est{U \mid \F} = \E{U \mid \F}\,. \]
This is to be interpreted by restricting $\F$ to $\{\prc^0(\rt) \equiv 1\}$ on the left and the random variable
$\E{U \mid \F}$ to $\{\prc^0(\rt) \equiv 1\}$ on the right.
\end{lem}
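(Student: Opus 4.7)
The plan is to verify directly that the random variable $V := \E{U \mid \F}$, viewed on the event $\{\prc^0(\rt) \equiv 1\}$, satisfies the two defining properties of $\Est{U \mid \F}$ on the restricted probability space: $\F$-measurability (which is automatic under restriction to the trace $\sigma$-algebra) and the partial averaging identity $\Est{V \mathbf{1}_A} = \Est{U \mathbf{1}_A}$ for every $A \in \F$ contained in $\{\prc^0(\rt) \equiv 1\}$. Once both are verified, uniqueness of conditional expectation under $\mathbb{P}^*$ forces $V = \Est{U \mid \F}$.

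First I would unpack $\mathbb{E}^*$: by the very definition $\Est{W} = \E{\prc^0(\rt)\,W}/\E{\prc^0(\rt)}$, so $\mathbb{P}^*$ is nothing more than the original measure $\mathbb{P}$ biased by the density $\prc^0(\rt)/\E{\prc^0(\rt)}$. The crucial point is that the hypothesis $\sigma(\prc^0(\rt)) \subset \F$ makes the indicator $\prc^0(\rt) = \mathbf{1}_{\{\prc^0(\rt)=1\}}$ itself $\F$-measurable, so for every $A \in \F$ the product $\prc^0(\rt)\mathbf{1}_A$ is $\F$-measurable as well. Pulling this product inside the ordinary conditional expectation via the tower property gives
\[ \E{\prc^0(\rt) \mathbf{1}_A \, \E{U \mid \F}} = \E{\prc^0(\rt) \mathbf{1}_A \, U}, \]
and dividing both sides by $\E{\prc^0(\rt)}$ yields exactly the required identity $\Est{V \mathbf{1}_A} = \Est{U \mathbf{1}_A}$ under $\mathbb{E}^*$.

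The argument is essentially a one-line verification, and there is no serious obstacle to anticipate. The only substantive point is precisely where the hypothesis on $\F$ enters: without $\sigma(\prc^0(\rt)) \subset \F$ the Radon--Nikodym density of $\mathbb{P}^*$ with respect to $\mathbb{P}$ would fail to be $\F$-measurable, and the identity would need correction by a Bayes-type factor. Checking that $\F$-measurability of $V$ passes to the restricted space is routine, since the trace of $\F$ on $\{\prc^0(\rt) \equiv 1\}$ is generated by the restrictions of its measurable sets. So in practice the whole lemma amounts to invoking the tower property once, with the $\F$-measurability of $\prc^0(\rt)$ as the enabling hypothesis.
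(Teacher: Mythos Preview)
Your proposal is correct and follows essentially the same approach as the paper: both verify the defining property of conditional expectation under $\mathbb{P}^*$ by using the $\F$-measurability of $\prc^0(\rt)$ to pull it inside the ordinary conditional expectation and then apply the tower property. The only cosmetic difference is that the paper tests against general $\F$-measurable $Z$ while you test against indicators $\mathbf{1}_A$, which are equivalent formulations.
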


\begin{proof}
Suppose that $Z$ is a $\F$-measurable random variable. Then,
\begin{equation*}
\Est{Z\,\E{U \mid \F}} = \frac{\E{\prc^0(\rt) Z\,\E{U \mid \F}}}{\E{\prc^0(\rt)}}\\
= \frac{\E{\E{\prc^0(\rt) Z \,U \mid \F}}}{\E{\prc^0(\rt)}}.\end{equation*}
The last equality is because $\prc^0(\rt)$ and $Z$ are $\F$-measurable. From the
definition of conditional expectation,
\begin{equation*}
\frac{\E{\E{\prc^0(\rt) Z \,U \mid \F}}}{\E{\prc^0(\rt)}} = 
\frac{\E{\prc^0(\rt) Z U}}{\E{\prc^0(\rt)}}\\
= \Est{ZU}. \end{equation*}
The lemma follows from the definition of conditional expectation.
\end{proof}

Define a $[0,1]$-valued random variable $Q_{d,p} = Q_d(\ber^p, \iid^0_d)$,
which we denote the \emph{stability}, on the restricted probability space as follows.
Set
\begin{equation} \label{eqn:stability}
Q_{d,p} = \Est{\prc^1(\rt) \mid \iid^0_d, \ber^p} = \E{\prc^1(\rt) \mid \iid^0_d, \ber^p}\,.\end{equation}
The second equality follows from Lemma \ref{lem:restriction}. In an intuitive sense,
the stability is the conditional probability, given the root is included in the percolation process,
that it remains to be included after re-randomizing the labels.
We now observe that the ratio of the intersection densities are moments of the stability.

\begin{lem} \label{lem:stability}
For every $i \geq 1$ we have that
$$\Est{Q_{d,p}^{i-1}} = \frac{\alpha_{i,d}}{\alpha_{1,d}}.$$
\end{lem}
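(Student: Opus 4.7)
The plan is to unwind the definition of $\mathbb{E}^*$ and then use the tower property to rewrite $\E{\prc^0(\rt)\,Q_{d,p}^{i-1}}$ as the joint probability that $\prc^0(\rt), \prc^1(\rt), \ldots, \prc^{i-1}(\rt)$ all equal $1$. Exchangeability of the coupled sequence then identifies this with $\alpha_{i,d}(\log d)/d$, giving the claim after dividing by $\E{\prc^0(\rt)} = \den{\prc_d} = \alpha_{1,d}(\log d)/d$.

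First I would write $\Est{Q_{d,p}^{i-1}} = \E{\prc^0(\rt)\,Q_{d,p}^{i-1}}/\E{\prc^0(\rt)}$ directly from the definition of $\mathbb{E}^*$, reducing the problem to evaluating the numerator.

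The central observation is a conditional independence property of the coupling. Conditional on $(\iid^0_d, \ber^p)$, each labelling $\cplbl^j$ for $j \geq 1$ is a deterministic function of the fresh independent labels $\iid^j_d$ at vertices with $\ber^p(v) = 1$, together with the already conditioned-on values $X^0(v)$ at vertices with $\ber^p(v) = 0$. Since $\iid^1_d, \iid^2_d, \ldots$ are mutually independent and independent of $(\iid^0_d, \ber^p)$, the processes $\prc^1, \prc^2, \ldots$ are conditionally iid given $\sigma(\iid^0_d, \ber^p)$, and in particular the random variables $\prc^j(\rt)$ for $j \geq 1$ are iid with common conditional mean $Q_{d,p}$ by (\ref{eqn:stability}). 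Consequently
\[ \E{\prod_{j=1}^{i-1} \prc^j(\rt) \,\Big|\, \iid^0_d, \ber^p} = Q_{d,p}^{i-1}. \]
Since $\prc^0(\rt)$ is $\sigma(\iid^0_d)$-measurable, it can be pulled inside this conditional expectation, and then the tower property yields
\[ \E{\prc^0(\rt)\,Q_{d,p}^{i-1}} = \E{\prod_{j=0}^{i-1} \prc^j(\rt)} = \pr{\prc^j(\rt) = 1 \text{ for } j = 0, 1, \ldots, i-1}. \]

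The last step is to observe the exchangeability of the sequence $\prc^0, \prc^1, \prc^2, \ldots$: the joint distribution of the underlying labels $(\iid^0_d, \cplbl^1, \cplbl^2, \ldots)$ is symmetric under permutations of the index, because at each vertex $v$ with $\ber^p(v) = 1$ the labels are iid uniform while at $v$ with $\ber^p(v) = 0$ they are all equal. The joint probability above therefore equals $\pr{\prc^j(\rt) = 1 \text{ for } j = 1, \ldots, i} = \alpha_{i,d}(\log d)/d$ by (\ref{eqn:alpha}). No step poses a real obstacle; the only subtle point is recognising that the coupling is engineered precisely so that $\prc^1, \prc^2, \ldots$ are conditionally iid given $(\iid^0_d, \ber^p)$, and this single fact drives the entire computation.
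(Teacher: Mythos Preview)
Your proof is correct and follows essentially the same route as the paper: unwind the definition of $\mathbb{E}^*$, use the conditional independence of $\prc^1,\prc^2,\ldots$ given $(\iid^0_d,\ber^p)$ to identify $Q_{d,p}^{i-1}$ with the conditional expectation of the product $\prod_{j=1}^{i-1}\prc^j(\rt)$, pull in the $\sigma(\iid^0_d)$-measurable factor $\prc^0(\rt)$, and apply the tower property. The only cosmetic difference is that you spell out the exchangeability step (shifting indices from $\{0,\ldots,i-1\}$ to $\{1,\ldots,i\}$) explicitly, whereas the paper absorbs it into the earlier remark that $\alpha(S)$ depends only on $\#S$.
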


\begin{proof}
Note that $Q_{d,p} = \E{\prc^j(\rt) \mid \iid^0_d, \ber^p}$ for every $j$ from (\ref{eqn:stability})
and symmtery. Hence,
\begin{eqnarray*} \Est{Q_{d,p}^{i-1}} &=& \frac{\E{\prc^0(\rt) \, (\E{\prc^1(\rt) \mid \iid^0_d, \ber^p})^{i-1}}}{\E{\prc^0(\rt)}}\\
&=& \frac{\E{\prc^0(\rt) \, \left(\prod_{j=1}^{i-1}\E{\prc^j(\rt) \mid \iid^0_d, \ber^p}\right)}}{\E{\prc^0(\rt)}}\,.
\end{eqnarray*}

Observe that the $\prc^j$ are independent of each other conditioned on $(\iid^0_d, \ber^p)$. Hence,
\[ \prod_{j=1}^{i-1}\E{\prc^j(\rt) \mid \iid^0_d, \ber^p} = \E{\prod_{j=1}^{i-1} \prc^j(\rt) \mid \iid^0_d, \ber^p}\,.\]
Since $\prc^0$ is measurable with respect to $\iid^0_d$ we see that
\begin{eqnarray*}
\E{\prc^0(\rt) \, \left(\prod_{j=1}^{i-1}\E{\prc^j(\rt) \mid \iid^0_d, \ber^p}\right)} &=& \E{\prc^0(\rt) \prod_{j=1}^{i-1} \prc^j(\rt)} \\
&=& \pr{\prc^j(\rt) = 1,\; 0 \leq j \leq i-1} \,.
\end{eqnarray*}
Thus, $\Est{Q_{d,p}^{i-1}} = \frac{\pr{\prc^j(\rt) = 1, \; 0 \leq j \leq i-1}}{\den{\prc_d}} = \frac{\alpha_{i,d}}{\alpha_{1,d}}$.
\end{proof}

We will require the following continuity lemma about the stability.
\begin{lem} \label{lem:continuity}
For each $u \geq 0$, the moment $\Est{Q_{d,p}^u}$ is a continuous function of $p$.
When $p=0$, $\Est{Q_{d,p}^u} = 1$, and when $p=1$, $\Est{Q_{d,p}^u} = \den{\prc_d}^u$.
\end{lem}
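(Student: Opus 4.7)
The plan is to exploit the block factor assumption made earlier in the paper to reduce $\Est{Q_{d,p}^u}$ to a polynomial in $p$, from which continuity is immediate. Let $r$ be the radius of the block factor $f_{\prc_d}$ and $B = V(\T_{d,r})$ the finite set of vertices on which both $\prc^0(\rt)$ and $\prc^1(\rt)$ depend. In particular, only the restriction $\ber^p|_B$ enters the definition of $Q_{d,p}$, so the idea is to decompose $Q_{d,p}$ along the $2^{|B|}$ possible patterns of $\ber^p$ on $B$.

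The main computation proceeds as follows. For each $S \subseteq B$, condition on the event $\{v \in B : \ber^p(v) = 1\} = S$, which has probability $p^{|S|}(1-p)^{|B|-|S|}$ and is independent of all the $\iid^i_d$. On this event one has $W^1(v) = X^1_d(v)$ for $v \in S$ and $W^1(v) = X^0_d(v)$ for $v \in B \setminus S$, so
\[ Q_{d,p} = g_S(\iid^0_d) := \E{f_{\prc_d}\bigl((\cplbl^1(v))_{v \in B}\bigr) \mid \iid^0_d, \ber^p|_B = S}, \]
which is a $[0,1]$-valued function depending only on $\iid^0_d$ restricted to $B \setminus S$ (the labels $X^1_d(v)$ for $v \in S$ are integrated out and are independent of $\iid^0_d$). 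Multiplying by $\prc^0(\rt)$ and taking expectations, the independence of $\ber^p$ from the $\iid^i_d$ gives
\[ \Est{Q_{d,p}^u} = \frac{1}{\den{\prc_d}} \sum_{S \subseteq B} p^{|S|}(1-p)^{|B|-|S|}\, \E{\prc^0(\rt)\, g_S(\iid^0_d)^u}, \]
a polynomial in $p$ of degree at most $|B|$, and therefore continuous on $[0,1]$.

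The two boundary values then drop straight out of this formula. At $p = 0$ only $S = \emptyset$ contributes: here $\cplbl^1 \equiv \iid^0_d$ on $B$, so $\prc^1(\rt) = \prc^0(\rt)$ given $\iid^0_d$, whence $Q_{d,0} = \prc^0(\rt)$ and $\prc^0(\rt) Q_{d,0}^u = \prc^0(\rt)$ (using $\prc^0(\rt) \in \{0,1\}$), giving $\Est{Q_{d,0}^u} = 1$. At $p = 1$ only $S = B$ contributes: here $\cplbl^1|_B = \iid^1_d|_B$ is independent of $\iid^0_d$, so $Q_{d,1} = \E{\prc^1(\rt)} = \den{\prc_d}$ is deterministic, yielding $\Est{Q_{d,1}^u} = \den{\prc_d}^u$. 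There is no serious obstacle here: the lemma is essentially a direct unwinding of definitions once one uses that the block factor assumption makes the random quantities depend on only finitely many labels and thereby turns the $p$-dependence into a polynomial.
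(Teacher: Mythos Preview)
Your proposal is correct and follows essentially the same approach as the paper: both exploit the block factor assumption to condition on the finitely many patterns of $\ber^p$ on $V(\T_{d,r})$, observe that each pattern occurs with probability $p^{|S|}(1-p)^{|B|-|S|}$, and conclude that $\Est{Q_{d,p}^u}$ is a polynomial in $p$; the endpoint evaluations are likewise handled identically by noting $\prc^1 = \prc^0$ when $p=0$ and $\prc^1$ is independent of $\iid^0_d$ when $p=1$. Your version is in fact slightly more explicit in writing out the polynomial formula, but the argument is the same.
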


\begin{proof}
Recall we had assumed that the factor $f_{\prc_d}$ is a block factor with some radius $r = r_d < \infty$.
The parameter $p$ enters into $\Est{Q_{d,p}^u}$ through the random finite set $\{v \in V(\T_{d,r}): \ber^p(v) = 1\}$.
If $S \subset V(\T_{d,r})$ then 
$$ \pr{\ber^p(v) = 1 \;\text{for}\; v \in S \; \text{and}\; \ber^p(v) = 0 \;\text{for}\; v \in V(T_{d,r}) \setminus S}
= p^{\# S}(1-p)^{\# (V(T_{d,r}) \setminus S)}\,.$$
This is a polynomial in $p$, and by conditioning on the output of $\ber^p$ restricted to $V(\T_{d,r})$,
it follows that $\Est{Q_{d,p}^u}$ can be expressed as a convex combination of terms that are free of $p$ with
coefficients given by these probabilities. Thus, $\Est{Q_{d,p}^u}$ is a polynomial in $p$ as well.

When $p=0$ the process $\ber^p \equiv 0$, and $\prc^0 = \prc^1$. Therefore, conditioning on $\iid^0_d$ and restricting
to $\{\prc^0_d(\rt) \equiv 1\}$ makes $Q_{d,0} \equiv 1$. When $p=1$ the process $\ber^p \equiv 1$. So $\prc^1$
becomes independent of $\iid^0_d$. Then the conditioning has no effect and $Q_{d,1} = \E{\prc^1(\rt)} = \den{\prc_d}$.
\end{proof}

\subsection{Completing the proof} \label{sec:mainproof}
Let $\{\prc_d\}$ be a sequence of FIID percolation processes with $\prc_d \in \{0,1\}^{V(\T_d)}$.
For the sake of a contradiction we assume that $\corr{\prc_d} \to 0$ while 
$\den{\prc_d} \geq \alpha \frac{\log d}{d}$ for some $\alpha > 1$. (Technically speaking, we need
to move to a subsequence in $d$, but we can assume WLOG that this is the case.)
Note that it follows from Theorem \ref{thm:corr} that $\alpha \leq 2$. Because of this
we may also assume WLOG that $\den{\prc_d} \leq 10 \frac{\log d}{d}$ for every $d$.

For fixed values of $k \geq 1$ and $0 \leq p \leq 1$ consider the coupling $(\prc^1_d, \ldots, \prc^k_d)$
described in Section \ref{sec:coupling}. Denote the edge profile of this coupled process as $(P_d, \pi_d)$ where
$P_d = [P_d(S,T)]_{S, T \subset \{1,\ldots,k\}}$. The arguments in this section will involve asymptotics
for large $d$ with the values of $k$ and $p$ fixed. As we assumed that $\den{\prc_d} \leq 10 \frac{\log d}{d}$,
for any subset $S \neq \emptyset$, the quantity
\begin{equation} \label{eqn:pibound} \pi_d(S) \leq \pi(\{1\}) \leq \den{\prc_d} \leq 10 \frac{\log d}{d}.\end{equation}
Let us write $\corr{\prc_d} = \eps_d$ so that $\eps_d \to 0$ as $d \to \infty$ by assumption.
Then we have from (\ref{eqn:pstbound}) that
\begin{equation} \label{eqn:pstbound2}
P_d(S,T) \leq \eps_d \frac{\log^2 d}{d^2}\;\text{for every pair}\; S,T\;\text{such that}\; S\cap T \neq \emptyset.\end{equation}

We apply the upper bound from Lemma \ref{lem:maxentropy} to the entropy functional
associated to the edge profile $(P_d, \pi_d)$. We take $\chi = \{0,1\}^k$, the role of the
element $0$ is taken by the empty set, and $\Lambda = \{ (S, T): S \cap T \neq \emptyset \}$.
We may take $K = 100 \eps_d \frac{\log^2 d}{d^2}$, which follows from (\ref{eqn:pstbound2}).
Also, we may take $J = 10 \frac{\log d}{d}$.
Applying Lemma \ref{lem:maxentropy} we conclude that for some $d_0 = d_0(k)$,
if $d \geq d_0$ then
\begin{equation} \label{eqn:coupledentropy}
\frac{d}{2} H(P_d) - (d-1)H(\pi_d) \leq H(\pi_d) - \frac{d}{2}\pi_d^2(\Lm) +
4^k \,[\eps_d + h(\eps_d)] \,\frac{\log^2 d}{d}.\end{equation}

We bound the r.h.s.~of (\ref{eqn:coupledentropy}).
For $S \neq \emptyset$, define $\beta(S)$ by writing $\pi_d(S) = \beta(S) \frac{\log d}{d}$.
We ignore explicitly writing the dependence of $\beta(S)$ on $d$.
With this notation we have
$$\pi_d^2(\Lm)  = \sum_{(S,T): S \cap T \neq \emptyset} \beta(S) \beta(T) \frac{\log^2 d}{d^2}.$$

\begin{lem} \label{lem:coupledentropy}
The entropy functional for $(P_d,\pi_d)$ satisfies the following for $d \geq d_0$:
\begin{align*}
\frac{d}{2} H(P_d) - (d-1)H(\pi_d) & \leq 
\left (\sum_{S \neq \emptyset} \beta(S) - \frac{1}{2} \sum_{(S,T): S \cap T \neq \emptyset} \beta(S) \beta(T) \right ) \frac{\log^2 d}{d} +\\
& 10 \cdot 2^k \cdot \frac{\log d}{d} + 4^k \,[\eps_d + h(\eps_d)] \,  \frac{\log^2 d}{d}.
\end{align*}
\end{lem}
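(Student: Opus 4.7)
The plan is to subtract the matching pieces from both sides of the target inequality and reduce to a linear bound on $H(\pi_d)$ alone. On the right-hand side of (\ref{eqn:coupledentropy}), the $4^k[\eps_d + h(\eps_d)]\frac{\log^2 d}{d}$ term already appears verbatim in the target, and substituting $\pi_d(S) = \beta(S)\frac{\log d}{d}$ into $\frac{d}{2}\pi_d^2(\Lm)$ produces exactly $\frac{1}{2}\sum_{(S,T):\,S\cap T\neq\emptyset}\beta(S)\beta(T)\frac{\log^2 d}{d}$. So the lemma reduces to proving
\[
H(\pi_d) \;\leq\; \Bigl(\sum_{S\neq\emptyset}\beta(S)\Bigr)\frac{\log^2 d}{d} \;+\; 10\cdot 2^k \cdot \frac{\log d}{d}.
\]

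To prove this I would split $H(\pi_d) = h(\pi_d(\emptyset)) + \sum_{S\neq\emptyset}h(\pi_d(S))$ and handle the two groups separately. For each nonempty $S$, the product identity $h(xy) = xh(y) + yh(x)$ from (\ref{eqn:h1}), applied with $x = \beta(S)$ and $y = \frac{\log d}{d}$, gives
\[
h(\pi_d(S)) \;=\; \beta(S)\,h\Bigl(\tfrac{\log d}{d}\Bigr) + \tfrac{\log d}{d}\,h(\beta(S)).
\]
Since $h(\frac{\log d}{d}) = \frac{\log d}{d}(\log d - \log\log d) \leq \frac{\log^2 d}{d}$, the first piece contributes exactly the target leading term $\sum_{S\neq\emptyset}\beta(S)\frac{\log^2 d}{d}$. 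For the residual, $h$ is bounded on $[0,\infty)$ by $1/e$, so after summing over the $2^k - 1$ nonempty subsets this piece is absorbed into an error of order $2^k\cdot\frac{\log d}{d}$.

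For $S = \emptyset$ I use $\pi_d(\emptyset) = 1 - \sum_{S\neq\emptyset}\pi_d(S)$. Every nonempty $S$ satisfies $\pi_d(S) \leq \pi_d(\{1\}) \leq \den{\prc_d} \leq 10\frac{\log d}{d}$ by (\ref{eqn:pibound}), so the total deviation from $1$ is at most $(2^k-1)\cdot 10 \frac{\log d}{d}$. Applying the elementary inequality $h(1-x) \leq x$ from (\ref{eqn:h2}) then bounds $h(\pi_d(\emptyset))$ by the same amount. Combining this with the residual from the previous paragraph gives an overall error of the form $C_k\frac{\log d}{d}$ with $C_k$ depending only on $k$; the bound $10\cdot 2^k$ stated in the lemma is then a convenient rounding valid for all sufficiently large $d$, and any such $k$-dependent constant would suffice for the subsequent analysis.

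The proof is essentially bookkeeping; there is no real obstacle. The only point requiring a moment of thought is verifying that the residual $\frac{\log d}{d}h(\beta(S))$ is genuinely of lower order than the main term $\beta(S)\frac{\log^2 d}{d}$, which follows from the uniform bound $\beta(S) \leq 10$ inherited from the standing assumption $\den{\prc_d} \leq 10\frac{\log d}{d}$ (the assumption preventing $h(\beta(S))$ from growing in $d$).
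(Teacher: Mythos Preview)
Your proposal is correct and follows essentially the same argument as the paper: both reduce (\ref{eqn:coupledentropy}) to a bound on $H(\pi_d)$, split off $h(\pi_d(\emptyset))$ via $h(1-x)\le x$, and handle each nonempty $S$ by the product identity (\ref{eqn:h1}) together with the uniform bound on $h$. The only cosmetic differences are that the paper uses the cruder bound $h\le 1$ (you use $h\le 1/e$) and in fact arrives at $11\cdot 2^k$ rather than the stated $10\cdot 2^k$; as you correctly note, the precise $k$-dependent constant is irrelevant for the sequel.
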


\begin{proof}
We consider the r.h.s.~of (\ref{eqn:coupledentropy}).
For $S \neq \emptyset$, from the identity (\ref{eqn:h1}) we have that
$h(\pi_d(S)) \leq \beta(S) \frac{\log^2 d}{d} + h(\beta(S)) \frac{\log d}{d}$. Now, 
$h(\beta(S)) \leq 1$ since $h(x) \leq 1$ for all $x \geq 0$. Thus,
$h(\pi_d(S)) \leq \beta(S) \frac{\log^2d}{d} + \frac{\log d}{d}$.
On the other hand,
$$\pi_d(\emptyset) = 1 - \sum_{S \neq \emptyset} \beta(S) \frac{\log d}{d}, \;\;\text{and}\;\;
\sum_{S \neq \emptyset} \beta(S) \frac{\log d}{d} \leq 10 \cdot 2^k \cdot \frac{\log d}{d}
\;\;\text{since}\;\; \beta(S) \leq 10 \;\;\text{by}~(\ref{eqn:pibound}).$$ 
Hence, $h(\pi_d(\emptyset)) \leq 10\cdot 2^k\cdot \frac{\log d}{d}$ because $h(1-x) \leq x$.
Therefore,
$$ H(\pi_d) - \frac{d}{2}\pi_d^2(\Lm) \leq  \left (\sum_{S \neq \emptyset} \beta(S) -
\frac{1}{2} \sum_{(S,T): S \cap T \neq \emptyset} \beta(S) \beta(T) \right ) \frac{\log^2 d}{d} + \frac{11\cdot 2^k\cdot \log d}{d}.$$
The desired conclusion follows from the inequality in (\ref{eqn:coupledentropy}).
\end{proof}

The bound provided in Lemma \ref{lem:coupledentropy} depends on the coupling
parameter $p$, which so far has been held fixed, only through the $\beta(S)$'s. We
may take an infimum over $p \in [0,1]$ in the inequality given in Lemma \ref{lem:coupledentropy}.
Moreover, the term $\eps_d + h(\eps_d) \to 0$ as $d \to \infty$. Hence, the top order term
on the r.h.s.~of the inequality presented in Lemma \ref{lem:coupledentropy} is the term
of order $\frac{\log^2 d}{d}$. Then, due to the non-negativity of the entropy functional
(Theorem \ref{thm:FIIDentropy}) we deduce from Lemma \ref{lem:coupledentropy} that
for every $k \geq 1$,
\begin{equation} \label{eqn:betabound}
 \liminf_{d \to \infty} \, \inf_{p \in [0,1]} \; \sum_{S \neq \emptyset} \beta(S) -
 \frac{1}{2} \sum_{(S,T): S \cap T \neq \emptyset} \beta(S) \beta(T) \geq 0.
\end{equation}

For the analysis of (\ref{eqn:betabound}) it is convenient to parametrize $\beta(S)$ through
the intersection densities of the coupled process, as defined in (\ref{eqn:alpha}),
and then express (\ref{eqn:betabound}) in terms of those densities.
The principle of inclusion and exclusion provides the following relation between the $\beta(S)$ and the 
intersection densities $\alpha(S)$ defined in (\ref{eqn:alpha}).
\begin{eqnarray}
\label{eqn:pieatob}
\alpha(S) &=& \sum_{T: S \subset T} \beta(T) \\
\label{eqn:piebtoa}
\beta(S) &=& \sum_{T: S \subset T} (-1)^{\# (T \setminus S)} \alpha(T) \,.
\end{eqnarray}

We now show that the quantity in (\ref{eqn:betabound}) can be rewritten as follows.
\begin{lem} \label{lem:betaalphaidn}
\begin{equation} \label{betaalphaidn}
\sum_{S \neq \emptyset} \beta(S) - \frac{1}{2} \sum_{(S,T): S \cap T \neq \emptyset} \beta(S) \beta(T) =
\sum_{i} (-1)^{i-1}\binom{k}{i} (\alpha_{i,d,p} - \frac{1}{2}\alpha^2_{i,d,p})\,.
\end{equation}
\end{lem}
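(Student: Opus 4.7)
The plan is to prove the two sides of \eqref{betaalphaidn} match term by term by using the Möbius-type relation \eqref{eqn:piebtoa} together with a clean inclusion–exclusion identity for the indicator $\mathbf{1}_{S \cap T \neq \emptyset}$. Exchangeability of the processes $\prc^1_d,\ldots,\prc^k_d$ then collapses sums over subsets of a given size into $\binom{k}{i}$ times a single term.

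For the linear term, I would substitute \eqref{eqn:piebtoa} and swap the order of summation:
\[ \sum_{S \neq \emptyset} \beta(S) = \sum_{T} \alpha(T) \sum_{S: \emptyset \neq S \subset T} (-1)^{|T \setminus S|}. \]
The standard identity $\sum_{S \subset T}(-1)^{|T \setminus S|} = \mathbf{1}_{T=\emptyset}$ leaves, for $T\neq\emptyset$, only the omitted $S=\emptyset$ contribution $-(-1)^{|T|}$. Thus $\sum_{S\neq\emptyset}\beta(S) = \sum_{T \neq \emptyset}(-1)^{|T|+1}\alpha(T)$, and exchangeability gives $\sum_{i=1}^{k}(-1)^{i-1}\binom{k}{i}\alpha_{i,d,p}$.

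For the quadratic term, the key identity I would establish is
\[ \mathbf{1}_{S \cap T \neq \emptyset} = \sum_{U \neq \emptyset}(-1)^{|U|+1}\,\mathbf{1}_{U \subset S}\,\mathbf{1}_{U \subset T}, \]
which follows by applying inclusion–exclusion to the common intersection: if $V := S\cap T\neq\emptyset$, the right side collapses to $\sum_{\emptyset \neq U \subset V}(-1)^{|U|+1} = 1$, whereas it is obviously zero when $V=\emptyset$. Substituting this into $\sum_{S,T:\,S\cap T\neq\emptyset}\beta(S)\beta(T)$ and swapping sums decouples the $S$ and $T$ sums, so \eqref{eqn:pieatob} yields
\[ \sum_{S,T:\,S\cap T\neq\emptyset}\beta(S)\beta(T) = \sum_{U \neq \emptyset}(-1)^{|U|+1}\Bigl(\sum_{S \supset U}\beta(S)\Bigr)\Bigl(\sum_{T \supset U}\beta(T)\Bigr) = \sum_{U \neq \emptyset}(-1)^{|U|+1}\alpha(U)^2. \]
Invoking exchangeability again rewrites this as $\sum_{i=1}^{k}(-1)^{i-1}\binom{k}{i}\alpha_{i,d,p}^2$. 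Combining the two computations with the factor $\tfrac{1}{2}$ yields \eqref{betaalphaidn}.

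The only substantive step is spotting the inclusion–exclusion expansion of $\mathbf{1}_{S\cap T\neq\emptyset}$ over common subsets $U$; once that is in hand the rest is bookkeeping, and exchangeability ($\alpha(U)=\alpha_{|U|,d,p}$) immediately converts sums over subsets into sums over sizes.
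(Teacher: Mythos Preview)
Your proof is correct and is essentially the paper's argument: for the quadratic term the paper swaps the order of summation in exactly the same way, writing $\alpha(U)^2=\sum_{T_1,T_2:\,U\subset T_1\cap T_2}\beta(T_1)\beta(T_2)$ and then using the identical binomial identity $\sum_{\emptyset\neq U\subset V}(-1)^{|U|}=-1$, which is precisely your inclusion--exclusion expansion of $\mathbf{1}_{S\cap T\neq\emptyset}$ read in the other direction. The only cosmetic difference is the linear term, where the paper notes that both sides equal the normalized density of the union $\{\exists i:\prc^i_d(\rt)=1\}$, whereas you reach the same conclusion by direct M\"obius inversion.
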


\begin{proof}
The term $\sum_{S \neq \emptyset} \beta(S)$ equals $\sum_i (-1)^{i-1}\binom{k}{i} \alpha_{i,d,p}$ because both these terms
are equal to the normalized density $\pr{ \prc^i_d(\rt) = 1\; \text{for some}\; i \leq k} \cdot \frac{d}{\log d}$. The relations (\ref{eqn:pieatob})
and (\ref{eqn:piebtoa}) imply that the quantity $\alpha(S)^2 = \sum_{(T_1, T_2): S \subset T_1 \cap T_2} \beta(T_1) \beta(T_2)$.
Thus,

\begin{eqnarray*}
\sum_{i=1}^k (-1)^{i-1} \binom{k}{i} \alpha_{i,d,p}^2 &=& - \sum_{S \neq \emptyset} (-1)^{\# S}\alpha(S)^2 \\
&=& - \sum_{S \neq \emptyset} (-1)^{\# S} \sum_{(T_1, T_2): S \subset T_1 \cap T_2} \beta(T_1) \beta(T_2) \\
&=& - \sum_{(T_1,T_2)} \beta(T_1) \beta(T_2) \sum_{S: S \subset T_1 \cap T_2, S \neq \emptyset} (-1)^{\# S}\\
&=& - \sum_{(T_1,T_2): T_1 \cap T_2 \neq \emptyset} \beta(T_1) \beta(T_2) \sum_{i=1}^{\#\, T_1 \cap T_2} (-1)^{i}\binom{\# \,T_1 \cap T_2}{i}\,.
\end{eqnarray*}

Recall the binomial identity $\sum_{i=1}^t (-1)^{i} \binom{t}{i} = (1-1)^t - 1 = -1$ for any integer $t \geq 1$.
Using this to simplify the last term from the equations above gives
$$\sum_{i=1}^k (-1)^{i-1} \binom{k}{i} \alpha_{i,d,p}^2 = \sum_{(S,T): S \cap T  \neq \emptyset} \beta(S) \beta(T)\,, \; \text{as required}.$$
\end{proof}

We now express (\ref{eqn:betabound}) in terms of the stability via Lemma \ref{lem:betaalphaidn}.
Let $Q_{d,p}$ denote the stability of $\prc_d$ as defined in (\ref{eqn:stability}).
Recall that $\alpha_{i,d,p} = \alpha_{1,d} \Est{Q_{d,p}^{i-1}}$ for all $i \geq 1$ from Lemma \ref{lem:stability}.
Henceforth, we denote the operator $\Est{\cdot}$ by $\E{\cdot}$. Let $R_{d,p}$ denote an
independent copy of $Q_{d,p}$. Then $\alpha_{i,d}^2 = \alpha_{1,d}^2 \E{(Q_{d,p}R_{d,p})^{i-1}}$.
Using the identity
$\sum_{i=1}^{k} (-1)^{i-1} \binom{k}{i} x^{i-1} =  \frac{1 - (1-x)^k}{x}$,
we translate the inequality from (\ref{eqn:betabound}) via the identity (\ref{betaalphaidn}) into

\begin{equation} \label{eqn:Qinq}
\liminf_{d \to \infty} \; \inf_{p \in [0,1]} \; \alpha_{1,d} \E{ \frac{1 - (1-Q_{d,p})^k}{Q_{d,p}}} -
\frac{\alpha_{1,d}^2}{2} \E{ \frac{1 - (1-Q_{d,p}R_{d,p})^k}{Q_{d,p}R_{d,p}}} \geq 0\,.
\end{equation}

Now we make a choice for the value of the coupling parameter $p$ for each value
of $d$.  Fix a parameter $u > 0$ and for all large $d$, pick $p = p(d,u)$ in the
construction of the coupling so that $$\E{Q_{d,p}^u} = 1/ \alpha.$$
This can be done due to the continuity of $p \to \E{Q_{d,p}^u}$ as given by Lemma \ref{lem:continuity}
and the assumption that $\alpha > 1$. We denote $Q_{d,p(d,u)}$ by $Q_d$
for convenience. As of now it is not at all clear as to why we set $p$ in this manner.
However, in the following argument we will see that setting $p$ this way is a judicious choice.

We now explain how to take a limit of $Q_d$ as $d \to \infty$ so that we may analyze (\ref{eqn:Qinq}) in terms of $k$.
Since probability distributions on $[0,1]$ are compact in the weak topology by Prokhorov's Theorem,
we choose a subsequence $(Q_{d_i}, R_{d_i})$ such that it converges in the weak limit to $(Q, R)$.
The random variables $Q$ and $R$ are independent and identically distributed with values in $[0,1]$.

Set $$s_k(x) = \frac{1 - (1-x)^k}{x} \;\;\text{for}\;\; 0 \leq x \leq 1.$$
Observe that $s_k(x) = 1 + (1-x) + \cdots + (1-x)^{k-1}$.
Thus, $s_k(x)$ is continuous, decreasing on $[0,1]$ with maximum value $s_k(0) = k$ and minimum value $s_k(1) = 1$.
Continuity of $s_k$ implies that $\E{s_k(Q_{d_i})} \to \E{s_k(Q)}$ and $\E{s_k(Q_{d_i}R_{d_i})} \to \E{s_k(QR)}$
by the definition of weak convergence of random variables.
We now take limits along the subsequence $d_i$ in (\ref{eqn:Qinq}). The quantity $\alpha_{1,d}$ may not
have a limit but since we assume, for sake of a contradiction, that $\alpha_{1,d} \geq \alpha > 1$, we deduce from taking
limits in the $d_i$ that for all $k \geq 1$
\begin{equation} \label{Qinq2} \E{s_k(Q)} \geq \frac{\alpha}{2} \E{s_k(QR)} .\end{equation}

\subsection{Final step: large $k$ analysis} \label{sec:finalstep}

We want to take the limit in $k$ of (\ref{Qinq2}) as well but we must be careful. The function $s_k(x)$
monotonically converges to $1/x$ for $x \in [0,1]$. So by taking limits in $k$ in (\ref{Qinq2}) and
using the independence of $Q$ and $R$ we conclude that $2 \E{1/Q} \geq \alpha \E{1/Q}^2$.
Of course, we do not know a priori that $\E{1/Q} < \infty$. Even if it were, we can only conclude
that $\alpha \leq 2/ \E{1/Q}$, which leaves us with the seemingly contradictory task of showing that
$\E{1/Q}$ is large but finite.

To deal with these issues we have to use the fact that we have chosen $p = p(d,u)$
such that $\E{Q_d^u} = 1/ \alpha$. In particular, as $x \to x^u$ is continuous for $0 \leq x \leq 1$,
we see that $\E{Q^u} = \lim_{i \to \infty} \E{Q_{d_i}^u} = 1/\alpha$. We have to control the
expectation $\E{1/Q}$ through our control of $\E{Q^u}$.

Three cases can arise: $\pr{Q = 0} > 0$, or $\pr{Q=0} = 0$ but $\E{1/Q} = \infty$, or $\E{1/Q} < \infty$.

\paragraph{\textbf{Case 1:}} $\pr{Q = 0} = q > 0$. The majority of the contribution to $\E{s_k(Q)}$ results from $\{Q=0\}$. More precisely,
$\frac{s_k(x)}{k} \to \mathbf{1}_{x=0}$ as $k \to \infty$, and $\frac{s_k(x)}{k} \in [0,1]$ for all $k$ and $x \in [0,1]$.
From the bounded convergence theorem we deduce that $\E{s_k(Q)/k} \to \pr{Q=0}$ and $\E{s_k(QR)/k} \to \pr{QR=0}$
as $k \to \infty$. The latter probability is $2q - q^2$ due to $Q$ and $R$ being independent and identically distributed.
By dividing the inequality in (\ref{Qinq2}) by $k$ and taking a limit we conclude that
\[ 2q - \alpha(2q-q^2) \geq 0, \; \text{or equivalently}, \; \alpha \leq \frac{2}{2-q}\,.\]

Now, since for $x \in [0,1]$, we have that $\mathbf{1}_{x=0} \leq 1 - x^u$. It follows from here that $q \leq 1 - \E{Q^u} = 1 - 1/\alpha$. Hence,
$$ \alpha \leq \frac{2}{2-q} \leq \frac{2}{1 + \alpha^{-1}}\,.$$
Simplifying the latter inequality gives $ \alpha \leq 1$; a contradiction.

\paragraph{\textbf{Case 2:}} $\pr{Q=0} = 0$ but $\E{\frac{1}{Q}} = \infty$. Now, most of the contribution to $\E{s_k(Q)}$ occurs when $Q$ is small.
Recall that $s_k(x) \nearrow 1/x$ as $k \to \infty$. By the monotone convergence theorem, $\E{s_k(Q)} \to \infty$ as $k \to \infty$.

Fix $0 < \epsilon < 1$, and write $s_k(x) = s_{k, \leq \eps}(x) + s_{k, > \eps}(x)$ where $s_{k, \leq \eps}(x) = s_k(x)\,\mathbf{1}_{x \leq \eps}$.
Note that $s_{k, > \eps}(x) \leq \eps^{-1}$ for all $k$. We have that
\begin{equation} \label{eqn:supbound} \E{s_k(Q)} = \E{s_{k, \leq \eps}(Q)} +  \E{s_{k, > \eps}(Q)} \leq \E{s_{k, \leq \eps}(Q)} + \eps^{-1}.\end{equation}
Therefore, $\E{s_{k, \leq \eps}(Q)} \to \infty$ with $k$ since $\E{s_k(Q)} \to \infty$.

Observe from the positivity of $s_k$ that
\[ \E{s_k(QR)} \geq \E{s_k(QR); Q \leq \eps, R > \eps} + \E{s_k(QR); Q > \eps, R \leq \eps}.\]
The latter two terms are equal by symmetry, so $\E{s_k(QR)} \geq 2 \E{s_k(QR); Q \leq \eps, R > \eps}$.
The fact that $s_k(x)$ is decreasing in $x$ and $R \leq 1$ imply that $s_k(QR) \geq s_k(Q)$.
Together with the independence of $Q$ and $R$ we deduce that
\[ \E{s_k(QR); Q \leq \eps, R > \eps} \geq \E{s_k(Q); Q \leq \eps, R > \eps} = \E{s_{k, \leq \eps}(Q)} \pr{R > \eps}.\]
Consequently,
\begin{equation} \label{eqn:slwbound} \E{s_k(QR)}  \geq 2 \E{s_{k, \leq \eps}(Q)} \pr{Q > \eps}.\end{equation}

The inequality in (\ref{Qinq2}) is $\frac{\alpha}{2} \leq \frac{\E{s_k(Q)}}{\E{s_k(QR)}}$. The bounds from (\ref{eqn:supbound}) and (\ref{eqn:slwbound})
imply that
$$\frac{\alpha}{2} \leq \frac{\E{s_{k, \leq \eps}(Q)} + \eps^{-1}}{2 \E{s_{k, \leq \eps}(Q)} \pr{Q > \eps}}\,.$$
Since $\E{s_{k, \leq \eps}(Q)} \to \infty$ with $k$ we can take a limit in $k$ to conclude that
$$ \alpha \leq \frac{1}{\pr{Q > \eps}}.$$
As $\eps \to 0$ the probability $\pr{Q > \eps} \to \pr{Q > 0} = 1$, by assumption. Thus, $\alpha \leq 1$, a contradiction.

\paragraph{\textbf{Case 3:}} $\E{1/Q}$ is finite. Since $s_k(x)$ monotonically converges to $1/x$ as $k \to \infty$, we deduce
from (\ref{Qinq2}) that
\[ 2 \E{\frac{1}{Q}} - \alpha \E{\frac{1}{QR}} \geq 0\,.\]
Since $\E{1/(QR)} = \E{1/Q}^2$, the inequality above becomes $\alpha \leq 2 \E{1/Q}^{-1}$.
Jensen's inequality implies that $\E{1/Q}^{-1} \leq \E{Q^u}^{1/u}$ .
However, $\E{Q^u} = 1/\alpha$ and this means that $\alpha \leq 2^{\frac{u}{u+1}}$.
As the previous two cases led to contradictions we conclude that $\alpha \leq 2^{\frac{u}{u+1}}$ for all $u > 0$. However,
as $u \to 0$ we see that $\alpha \leq 1$. This final contradiction completes the proof of Theorem \ref{thm:main}.

\vskip 0.1in
\paragraph{\textbf{Proof of Corollary \ref{cor:fincomp}}}
If $\prc$ is an FIID percolation process on $\T_d$ that has finite components with probability one,
then the components of $\prc$ are all finite trees. Thus, the component of the root provides
a measure on finite, rooted trees with the root being picked uniformly at random (due to invariance
and transitivity). If a tree has $n$ vertices then the expected degree of a uniform random root
is $2(n-1)/n \leq 2$. This implies that the average degree of the root of $\T_d$, given that it is
included in $\prc$, cannot be larger than 2. As such, the hypothesis of Theorem \ref{thm:main}
applies in this situation.

\subsection{Density bounds for small values of $d$} \label{sec:smalld}

We conclude this section with remarks on FIID percolation processes on $\T_d$ for `small'
values of $d$. Although the entropy inequality may be used in principal for every $d$ to derive bounds on the density
of FIID percolation on $\T_d$ with finite clusters, these upper bounds are not the best available
for small values of $d$. The best known explicit upper bounds for small values of $d$ ($d \leq 10$) is due to
Bau et.~al.~\cite{BWZ}. They are derived by using more exhaustive counting arguments on random $d$-regular
graphs than those which provide our entropy inequality. The cases for $d=3$ or $d=4$ are rather interesting.
The upper bound of $2/3$ for $\T_4$ have not be ruled out by further counting arguments, but the best known
lower bound is $0.6045$ due to a construction of Hoppen and Wormald \cite{HWlwbdd}.
We suspect that the upper bound of $2/3$ is optimal.

For $d=3$, Bau et.~al. provide a upper bound of $3/4$.
Endre Cs\'{o}ka has confirmed the upper bound of $3/4$ as optimal in personal communication .
Cs\'{o}ka's construction may be derived from an appropriate edge orientations of $\T_3$.
There exists, as a weak limit of FIID processes, an invariant orientation
of the edges of $\T_3$ such that the out-degree of every vertex is either 1 or 3.
The percolation process consists of the vertices with out-degree 1.
Invariance implies that their density is $3/4$. The clusters are infinite but
have at most two topological ends (which means that no cluster contains three
disjoint infinite rays emanating from a common vertex). The clusters can then
be made finite by performing a Bernoulli percolation on the vertices of out-degree 1
at any density less than 1. The density of the resulting percolation
process is arbitrarily close to $3/4$.

Although Cs\'{o}ka's construction realizes the optimal percolation process
as a weak limit of FIID processes, it is not necessarily true that
the limiting process is itself an FIID process. There exist invariant, but not FIID,
processes on $\T_d$ that are weak limits of FIID process \cite[Theorem 6]{HV}.

\section{FIID edge orientations of $\T_d$} \label{sec:orientations}

Following Cs\'{o}ka's construction of percolation processes via edge orientations,
we may ask what other types of FIID edge orientations are possible on $\T_d$.
In this section we will show that $\T_d$ admits an FIID edge orientation with no sources
or sinks.

FIID processes on the edges on $\T_d$ are defined analogous to vertex-indexed processes.
Since $\mathrm{Aut}(\T_d)$ acts transitively on the edges of $\T_d$ the definition for
vertex-indexed processes carries over naturally. An edge indexed FIID process
$\Phi \in \chi^{E(\T_d)}$ is thus determined by a measurable function $F: [0,1]^{E(\T_d)} \to \chi^{E(\T_d)}$
satisfying $F(\gamma \cdot \omega) = \gamma \cdot F(\omega)$ for all $\gamma \in \mathrm{Aut}(\T_d)$.
$\Phi$ is then defined as $\Phi = F(\iid)$, where $\iid$ is a random labelling of the edges of $\T_d$.

Given any edge orientation of a graph $G$, a vertex $v$ is called a source if all edges that are incident
to $v$ are oriented away from it. If all incident edges are oriented towards $v$, then $v$ is called a sink.
For $d \geq 3$, we use the existence of FIID perfect matchings \cite{LN} to construct an FIID edge orientation
of $\T_d$ with no sources or sinks. Such an orientation cannot exists for $\T_2$. Indeed, $\T_2$ has
only two edge orientations with no sources or sinks: either all edges point to the right or all point to the left.
However, the uniform distribution on these two orientations cannot be an FIID process because it is not mixing.

\begin{thm} \label{thm:orientation}
For every $d \geq 3$ there exists an FIID orientation of the edges of $\T_d$ that contains no sources or sinks.
\end{thm}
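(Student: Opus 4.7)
The plan is to construct the desired orientation as an explicit factor of iid on $\T_d$, using the existence of FIID perfect matchings (recalled from Lyons--Nazarov \cite{LN}) as the key tool. Fix $d\ge 3$, let $M$ be an FIID perfect matching on $\T_d$, and write $m(v)$ for the matching partner of $v$; every vertex then has exactly one matching edge and $d-1\ge 2$ non-matching edges.

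The construction proceeds in two stages. First, orient the non-matching edges by a factor rule depending on the iid labels $\iid$ --- a natural candidate is to orient $\{u,v\}\notin M$ from $u$ to $v$ whenever $X(m(u))<X(m(v))$, for which an order-statistics computation shows that the non-matching out-degree $k_v$ of each vertex $v$ is uniform on $\{0,1,\ldots,d-1\}$. Second, orient the matching edge $\{v,m(v)\}$ so as to prevent either endpoint from becoming a source or a sink: if $k_v=0$, the edge should leave $v$; if $k_v=d-1$, it should enter $v$; and symmetrically at $m(v)$; in every other case the matching-edge orientation is free. Any such orientation leaves every vertex with out-degree in $\{1,\ldots,d-1\}$.

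The main obstacle is consistency, since the matching edge is shared between $v$ and $m(v)$. The only bad configurations are the pairs $(k_v,k_{m(v)})\in\{(0,0),(d-1,d-1)\}$, in which the two endpoints demand incompatible orientations of the same edge. Under the naive rule above, $k_v$ and $k_{m(v)}$ depend on disjoint sets of iid labels (the tree structure forces the non-matching neighborhoods of $v$ and $m(v)$ to be disjoint), so they are independent; the bad pairs then occur with positive probability and hence almost surely by invariance, forcing sources or sinks to appear. The technical heart of the argument is therefore to refine the non-matching orientation so that $(k_v,k_{m(v)})$ avoids these two diagonal extremes with probability one. For $d\ge 4$ this can be arranged by coupling the randomness at $v$ with the randomness at $m(v)$ through an auxiliary FIID structure --- for example, a second matching of a derived graph --- that provides enough additional information to kill the bad configurations without spoiling the FIID property. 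The case $d=3$ is the most delicate, since $\T_3\setminus M$ decomposes into biinfinite paths on which no FIID orientation avoids extrema, so the matching edge must carry the entire burden of breaking source/sink candidates; handling this case should require a more subtle rule in the spirit of the construction of Cs\'oka mentioned in Section~\ref{sec:smalld}. Once a consistent orientation is produced the FIID property is automatic from the construction, and the absence of sources and sinks is immediate from the rule on matching edges.
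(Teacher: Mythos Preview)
Your proposal is not a proof: it identifies a plausible approach, shows it fails as stated, and then asserts that the failure can be repaired without actually carrying out the repair. You write that ``the technical heart of the argument is therefore to refine the non-matching orientation so that $(k_v,k_{m(v)})$ avoids these two diagonal extremes,'' but then the refinement is never given. For $d\ge 4$ you gesture at ``an auxiliary FIID structure --- for example, a second matching of a derived graph,'' without specifying which graph, which matching, or how exactly it is used to kill the configurations $(0,0)$ and $(d-1,d-1)$ while remaining a factor of iid. For $d=3$ you say only that the case ``should require a more subtle rule.'' In both cases the actual construction is missing, and this is precisely where the content of the theorem lies.

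The paper's argument is structurally different and avoids your consistency obstruction altogether. First, it reduces $\T_d$ to $\T_3$ by iteratively removing FIID perfect matchings and orienting \emph{those} edges uniformly at random; once $\T_3$ is oriented without sources or sinks, every vertex already has an in-edge and an out-edge from the $\T_3$ part, so the random orientations of the stripped matchings do not matter. For $\T_3$ itself, the paper reverses your order of operations: it takes an FIID perfect matching and orients the \emph{matching} edges first, independently and uniformly. The complement of the matching is a union of bi-infinite paths, and the key observation is that the random matching orientation partitions each such path into almost surely \emph{finite} segments $L_n$ along which the incident matching edges all point the same way. Because the $L_n$ are finite, one can orient each $L_n$ as an FIID (pick a direction at random), and then the single edge joining $L_n$ to $L_{n+1}$ is oriented deterministically from the endpoint whose matching edge is incoming to the endpoint whose matching edge is outgoing. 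This is exactly the step your approach cannot take: by orienting the matching edges first, the paper uses them to manufacture the finite structure needed to make the path orientation an FIID, whereas you try to use the matching edge last as a corrector and run into the two-sided consistency problem you describe.
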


\begin{figure}[!ht]
\begin{center}
\includegraphics[scale=.75]{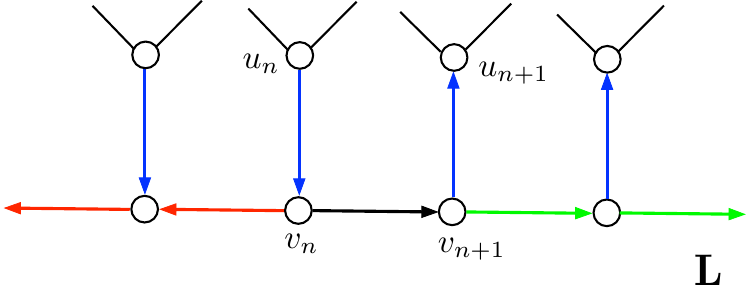}
\end{center}
\caption{Orientation of a bi-infinite path $\mathbf{L}$. Edges of the
perfect matching that meet $\mathbf{L}$ are in blue. The edges of $L_n$ are red
and edges of $L_{n+1}$ are green. The black edge between $v_n$ and $v_{n+1}$
connects $L_n$ to $L_{n+1}$.}\label{fig:orientation}	
\end{figure}

\begin{proof}
Lyons and Nazarov prove that $\T_d$ has an FIID perfect matching \cite{LN}.
We consider an FIID perfect matching of $\T_d$ and orient its edges independently and uniformly at random.
Then we remove the oriented edges of the matching and are left with a forest whose components are isomorphic to $\T_{d-1}$.
We continue with this procedure of selecting an FIID perfect matching from the components, orienting those edges
at random and then removing them from the graph, until we are left with a forest whose components are isomorphic to $\T_3$.
Thus, we have to orient the edges of $\T_3$ with no sources or sinks as an FIID process to finish.

Henceforth, we explain how to orient the edges of $\T_3$ in such a manner. Begin by finding an FIID perfect matching
on $\T_3$ and then orient the edges of the matching independently and uniformly at random. The un-oriented
edges span a 2-factor, that is, its components are un-oriented bi-infinite paths. Consider any such path $\mathbf{L}$.
The oriented perfect matching partitions $\mathbf{L}$ into contiguous finite paths $L_{n}, n \in \mathbb{Z}$,
which are characterized by the following properties.

First, $L_n$ is incident to $L_{n+1}$ via an edge $\{v_n,v_{n+1}\}$ in $\mathbf{L}$.
Second, for every $n$, the edges of the perfect matching that meet $L_n$ are all oriented
in `parallel', either all pointing towards $L_n$ or away from it. Finally,
the edges of the matching that meet $L_{n+1}$ are oriented in the opposite direction
compared to the (common) orientation of those edges of the matching that meet $L_n$.
This setup is shown in Figure \ref{fig:orientation}.
The reason the paths $L_n$ are finite is because the edges of the matching that are
incident to $\mathbf{L}$ are oriented independently. We can think of the $L_n$s that are pointing
towards the matching as the clusters of a Bernoulli percolation on $\mathbf{L}$ at density $1/2$.

To complete the orientation we first orient all the edges on the path $L_n$ in the same direction.
As there are two possible directions, we choose one at random. This is
done independently for each $L_n$ on every un-oriented bi-infinite path $\mathbf{L}$.
The finiteness of the $L_n$ is crucial to ensure that these orientations can be done as an FIID.
Following this, any vertex in the interior of the path $L_n$ has one
edge directed towards it and another directed away. This ensures that such a
vertex cannot form a source or a sink.

The vertices that can form sources or sinks are those at the endpoints of any path $L_n$.
Also, the un-oriented edges that remain are the $\{v_n, v_{n+1}\}$ which connect two contiguous
paths $L_n$ and $L_{n+1}$ on a common bi-infinite path $\mathbf{L}$. Observe that the endpoints of these
edges are precisely the vertices that can form sources and sinks (see the black edge in Figure \ref{fig:orientation}).
Given such an edge $\{v_n, v_{n+1}\}$ let $u_n$ (resp. $u_{n+1}$) be the neighbour of $v_n$ (resp. $v_{n+1})$
such that the edge $\{v_n,u_n\}$ (resp. $\{v_{n+1},u_{n+1}\}$) lies in the matching.
The edges $\{v_n,u_n\}$ and $\{v_{n+1},u_{n+1}\}$ have opposite orientations by design (see Figure \ref{fig:orientation}).
Suppose that $u_n \to v_n$ and $v_{n+1} \to u_{n+1}$.
We orient $\{v_n,v_{n+1}\}$ as $v_n \to v_{n+1}$, thus ensuring that both $v_n$ and $v_{n+1}$
have one incoming edge and another outgoing one. This completes the orientation of
$\T_3$ and produces no sources or sinks.

\end{proof}

\subsection*{Acknowledgements}
The Authors research has been supported by an NSERC CGS grant.
The author is grateful to Endre Cs\'{o}ka and Viktor Harangi for explaining the
construction of the optimal percolation process of density $3/4$ in Section \ref{sec:smalld}.
The author also thanks B\'{a}lint Vir\'{a}g for helpful discussions and the anonymous referees
for helpful suggestions regarding the exposition.

\end{document}